\numberwithin{equation}{section}
\newtheorem{theorem}{Theorem}[section]
\newtheorem{lemma}{Lemma}[section]
\newtheorem{remark}{Remark}[section]
\title{\bf Stackelberg Stochastic Differential Game with Asymmetric Noisy Observations
\thanks{This work is supported by National Key R\&D Program of China (Grant No. 2018YFB1305400) and National Natural Science Foundations of China (Grant No. 11971266, 11831010, 11571205).}}
\author{\normalsize Yueyang Zheng\thanks{\it School of Mathematics, Shandong University, Jinan 250100, P.R.China, E-mail: zhengyueyang0106@163.com} , Jingtao Shi\thanks{Corresponding author. \it School of Mathematics, Shandong University, Jinan 250100, P.R.China, E-mail: shijingtao@sdu.edu.cn}}
\begin{document}
\maketitle

\noindent{\bf Abstract:}\quad This paper is concerned with a Stackelberg stochastic differential game with asymmetric noisy observation, with one follower and one leader. In our model, the follower cannot observe the state process directly, but could observe a noisy observation process, while the leader can completely observe the state process. Open-loop Stackelberg equilibrium is considered. The follower first solve an stochastic optimal control problem with partial observation, the maximum principle and verification theorem are obtained. Then the leader turns to solve an optimal control problem for a conditional mean-field forward-backward stochastic differential equation, and both maximum principle and verification theorem are proved. An linear-quadratic Stackelberg stochastic differential game with asymmetric noisy observation is discussed to illustrate the theoretical results in this paper. With the aid of some Riccati equations, the open-loop Stackelberg equilibrium admits its state estimate feedback representation.

\vspace{1mm}

\noindent{\bf Keywords:}\quad Stackelgerg stochastic differential game, asymmetric noisy observation, leader and follower, open-loop Stackelberg equilibrium, maximum principle, verification theorem, conditional mean-field forward-backward stochastic differential equation, Riccati equation

\vspace{1mm}

\noindent{\bf Mathematics Subject Classification:}\quad 93E20, 49K45, 49N10, 49N70, 60H10

\section{Introduction}

The Stackelberg game is an important type of hierarchical noncooperative games (Ba\c{s}ar and Olsder \cite{BO98}), whose study can be traced back to the pioneering work by Stackelberg \cite{S52}. The Stackelberg game is usually know as the leader-follower game, whose economic background comes from markets where some firms have power of domination over others. The solutions of the Stackelberg differential game, are called Stackelberg equilibrium points in which there are usually two players with asymmetric roles, one leader and one follower. In order to obtain the Stackelberg equilibrium points, it is usual to divide the game problem into two parts. In the first part---the follower's problem, firstly the leader announces his strategy, then the follower will make an instantaneous response, and choose an optimal strategy corresponding to the given leader's strategy to optimize his/her cost functional. In the second part---the leader's problem, knowing the follower would take such an optimal strategy, the leader will choose an optimal strategy to optimize his/her cost functional. In a word, a distinctive feature of the Stackelberg differential games is that, the decisions must be made by two players and one of them is subordinated to the other because of the asymmetric roles, therefore one player must make a decision after the other player's decision is made. The Stackelberg game has been widely applied in the principal-agent/optimal contract problems (Cvitani\'{c} and Zhang \cite{CZ13}), the newsvendor/wholesaler problems (\O ksendal et al. \cite{OSU13}) and optimal reinsurance problems (Chen and Shen \cite{CS18}).

There exist some literatures about the Stackelberg differential game for It\^{o}'s {\it stochastic differential equations} (SDEs for short) in the past decades. Let us mention a few. Yong \cite{Yong02} studied the indefinite {\it linear-quadratic} (LQ for short) leader-follower differential game with random coefficients and control-dependent diffusion. {\it Forward-backward stochastic differential equations} (FBSDEs for short) and Riccati equations are applied to obtain the state feedback representation of the open-loop Stacklelberg equilibrium points. Bensoussan et al. \cite{BCS15} introduced several solution concepts in terms of the players' information sets, and studied LQ Stackelberg games under both adapted open-loop and closed-loop memoryless information structures, whereas the control variables do not enter the diffusion coefficient of the state equation. Mukaidani and Xu \cite{MX15} studied the Stackelberg game with one leader and multiple followers, in an infinite time horizon. The Stackelberg equilibrium points are developed, by cross-coupled algebraic Riccati equations, under both cooperative and non-cooperative settings of the followers, to attain Pareto optimality and Nash equilibrium, respectively. Xu and Zhang \cite{XZ16} and Xu et al. \cite{XSZ18} investigated the LQ Stackelberg differential games with time delay. Li and Yu \cite{LY18} proved the solvability of a kind of coupled FBSDEs with a multilevel self-similar domination-monotonicity structure, then it is used to characterize the unique equilibrium of an LQ generalized Stackelberg stochastic differential game with hierarchy in a closed form. Moon and Ba\c{s}ar \cite{MB18}, Lin et al. \cite{LJZ19} studied the the LQ mean-field Stackelberg stochastic differential games. Du and Wu \cite{DW19} investigated an LQ Stackelberg game of mean-field {\it backward stochastic differential equations} (BSDEs for short). Zheng and Shi \cite{ZS19} researched the Stackelberg game of BSDEs with complete information. Feng et al. \cite{FHH20} considered the LQ Stackelberg game of BSDEs with constraints.

However, in all the above literatures about the Stackelberg game, the authors assume that both the leader and the follower could fully observe the state of the controlled stochastic systems. Obviously, this is not practical in reality. Generally speaking, the players in the games can only obtain partial information in most cases. Then it is very natural to study the Stackelberg stochastic differential game under partial information. In fact, some efforts have been made such as the following. Shi et al. \cite{SWX16,SWX17} studied the Stackelberg stochastic differential game and introduced a new explanation for the asymmetric information feature, that the information available to the follower is based on the some sub-$\sigma$-algebra of that available to the leader. Shi et al. \cite{SWX20} investigated the LQ Stackelberg stochastic differential game with overlapping information, where the follower's and the leader's information have some joint part, while they have no inclusion relations. Wang et al. \cite{WWZ20} discussed an asymmetric information mean-field type LQ Stackelberg stochastic differential game with one leader and two followers.

Noting that in the game frameworks of papers \cite{WY12,CX14,SWX16,SWX17,WXX18,SWX20,WWZ20}, the information available to the players are described by the filtration generated by standard Brownian motions. In fact, in realty there exists many situations, where only some observation processes could be observed by the players. For example, in the financial market, the investors can only observe the security prices. Thus the portfolio process is required to be adapted to the natural filtration of the security price process (Xiong and Zhou \cite{XZ07}). In general, partially observed problems are related with filtering theory (Liptser and Shiryayev \cite{LS77}, Bensoussan \cite{Ben92}, Xiong \cite{Xiong08}). Partially observed stochastic optimal control and differential games have been researched by many authors, such as Li and Tang \cite{LT95}, Tang \cite{Tang98}, Wang and Wu \cite{WW09}, Huang et al. \cite{HWX09}, Wu \cite{Wu10}, Shi and Wu \cite{SW10}, Wang et al. \cite{WWX13,WWX15,WWX18}, Wu and Zhuang \cite{WZ18}.

Inspired by the above literatures, in this paper we study the Stackelberg differential game with asymmetric noisy observation, with deterministic coefficients and convex control domains. To the best of our knowledge, papers on the topic about partially observed Stackelberg differential games are quite lacking, except Li et al. \cite{LFCM19}. Note that in \cite{LFCM19}, the leader-follower Stackelberg stochastic differential game under a symmetric, partial observed information is researched. The novelty of the formulation and the contribution in this paper is the following.

(1) A new kind of Stackelberg stochastic differential game with asymmetric noisy observation is introduced. In our framework, the control processes of the follower are required to be adapted to the information filtration generated by the observation process, which is a Brownian motion is the original probability space, while the information filtration available to the leader is generated by both the Brownian noise and the observation process.

(2) For the follower's problem, a stochastic optimal control problem with partial observation is solved. The partial information maximum principle (Theorem 3.1) is given, which is direct from Bensoussan \cite{Ben92}, Li and Tang \cite{LT95}. Thanks for a mild assumption motivated by Huang et al. \cite{HLW10}, the partial information verification theorem (Theorem 3.2) is proved. It is remarkable that the Hamiltonian function \eqref{H1} and adjoint equations \eqref{adjoint1}, \eqref{adjoint11} are different from those in \cite{LFCM19}, but similar as \cite{LT95}.

(3) For the leader's problem, a stochastic optimal control problem of FBSDE is solved. Since the control processes are required by the information filtration generated by both the Brownian motion and the observation process, we encounter a difficulty when applying the techniques in Wu \cite{Wu10} and Wang et al. \cite{WWX13}. We overcome this difficulty by again the mild assumption used in Theorem 3.2 and Bayes' formula, to obtain the maximum principle of the leader (Theorem 3.3). However, by Clarke's generalized gradient, we could prove the verification theorem (Theorem 3.4) of the leader only in the special case, since the difficulty is fatal.

(4) For the LQ case, it consists of an LQ stochastic optimal control problem with partial observation for the follower, and followed by an LQ stochastic optimal control problem of the coupled conditional mean-field FBSDE with complete observation information for the leader. The state estimate feedback representation of the Stackelberg equilibrium is obtained, via some Riccati equations, by Theorems 3.1-3.4, and the technique of Yong \cite{Yong02}.

The rest of this paper is organized as follows. In Section 2, the Stackelberg stochastic differential game with asymmetric noisy observation is formulated. In Section 3, maximum principles and verification theorems are proved, for the problems of the follower and the leader, respectively. Then the LQ Stackelberg stochastic differential game with asymmetric noisy observation is investigated in Section 4. Specially, Subsection 4.1 is devoted to the solution to an LQ stochastic optimal control problem with partial observation of the follower. Subsection 4.2 is devoted to the solution to an LQ stochastic optimal control problem of coupled conditional mean-field FBSDE with complete observation information of the leader. The open-loop Stackelberg equilibrium is represented as its state estimate feedback form. Finally, Section 5 gives some concluding remarks.

\section{Problem formulation}

Let $T>0$ be be a finite time duration. Let $(\Omega,\mathcal{F},\mathbb{P})$ be a probability space on which two independent standard Brownian motions $W(\cdot)$ and $Y(\cdot)$ valued in $\mathbb{R}^{d_1}$ and $\mathbb{R}^{d_2}$ are defined. For $t\geq0$, $\mathcal{F}_t^{W}$ and $\mathcal{F}_t^{Y}$ are the natural filtration generated by $W(\cdot)$ and $Y(\cdot)$, respectively, and we set $\mathcal{F}_t=\mathcal{F}_{t}^{W}\times\mathcal{F}_{t}^{Y}$. $\mathbb{E}$ denotes the expectation under probability $\mathbb{P}$. In this paper, $L_{\mathcal{F}_T}^2(\Omega,\mathbb{R}^n)$ denotes the set of $\mathbb{R}^n$-valued, $\mathcal{F}_T$-measurable, square-integrable random variables, $L^2_\mathcal{F}(0,T;\mathbb{R}^n)$ denotes the set of $\mathbb{R}^n$-valued, $\mathcal{F}_t$-adapted, square integrable processes on $[0,T]$, and $L^\infty(0,T;\mathbb{R}^n)$ denotes the set of $\mathbb{R}^n$-valued, bounded functions on $[0,T]$.

\vspace{1mm}

Let us consider the following controlled {\it stochastic differential equation} (SDE, for short):
\begin{equation}\label{sde}
\left\{
\begin{aligned}
dx^{u_1,u_2}(t)&=b(t,x^{u_1,u_2}(t),u_1(t),u_2(t))dt+\sigma(t,x^{u_1,u_2}(t),u_1(t),u_2(t))dW(t),\ t\in[0,T],\\
  x^{u_1,u_2}(0)&=x_0,
\end{aligned}
\right.
\end{equation}
where $u_1(\cdot)$ and $u_2(\cdot)$ are control processes taken by the two players in the game, labeled 1 (the follower) and 2 (the leader) with values in nonempty convex sets $U_1\subseteq\mathbb{R}^{m_1}$ and $U_2\subseteq\mathbb{R}^{m_2}$, respectively. Here, $x_0\in\mathbb{R}^n$, $b:[0,T]\times\mathbb{R}^{n}\times U_1\times U_2\rightarrow\mathbb{R}^n$, $\sigma:[0,T]\times\mathbb{R}^{n}\times U_1\times U_2\rightarrow\mathbb{R}^{n\times d_1}$ are given functions.

We assume that the state process $x^{u_1,u_2}(\cdot)$ cannot be observed by the follower directly, but he/she can observe a related process $Y(\cdot)$, which satisfies the following controlled stochastic system:
\begin{equation}\label{observation eq}
Y(t)=\int_0^th(s,x^{u_1,u_2}(s),u_1(s),u_2(s))ds+W^{u_1,u_2}(t),
\end{equation}
where $h(t,x,u_1,u_2):[0,T]\times\mathbb{R}^{n}\times U_1\times U_2\rightarrow\mathbb{R}^{d_2}$ are give functions, and $W^{u_1,u_2}(\cdot)$ denotes a stochastic process depending on the control process pair $(u_1(\cdot),u_2(\cdot))$.

\vspace{1mm}

The following hypotheses are assumed.

{\bf(A1)} The functions $b,\sigma$ are linear growth and continuously differentiable with respect to $u_1,u_2$ and $x$, and their partial derivatives with respect to $u_1,u_2$ and $x$ are all uniformly bounded. Moreover, the function $h$ is continuously differentiable with respect to $u_1,u_2$ and $x$, and there exists some constant $K>0$, such that for any $t\in[0,T],x\in\mathbb{R}^n,u_1\in\mathbb{R}^{d_1},u_2\in\mathbb{R}^{d_2}$,
\begin{equation*}
|h(t,x,u_1,u_2)|+|h_x(t,x,u_1,u_2)|+|h_{u_1}(t,x,u_1,u_2)|+|h_{u_2}(t,x,u_1,u_2)|\leq K.
\end{equation*}

Motivated by some interesting random phenomena in realty, we begin to explain the asymmetric information between the follower and the leader, in our Stackelberg game problem. In the follower's problem, a stochastic optimal control problem with partial information need to be solved, since the information available to him/her at time $t$ is based on the filtration generated by the noisy observation process $\mathcal{F}_t^Y=\sigma\{Y(s),0\leq s\leq t\}$. However, in the leader's problem, a stochastic optimal control problem with complete information is required to be solved, since the information available to him/her at time $t$ is based on the complete information/filtration $\mathcal{F}_t$. Obviously, we have $\mathcal{F}_t^{Y}\subseteq\mathcal{F}_t$ and the information of the follower and the leader has the asymmetric feature and structure.

Next, we define the admissible control sets of the follower and the leader, respectively, as follows:
\begin{equation}\label{admissible controls}
\begin{aligned}
\rm\mathcal{U}_1&=\Big\{u_1\Big|u_1: \Omega\times[0,T]\rightarrow U_1 \mbox{ is }\mathcal{F}_t^Y\mbox{-adapted and }
  \sup_{0\leq t\leq T}\mathbb{E}|u_1(t)|^i<\infty,i=1,2,\cdots\Big\},\\
\rm\mathcal{U}_2&=\Big\{u_2\Big|u_2: \Omega\times[0,T]\rightarrow U_2 \mbox{ is }\mathcal{F}_t\mbox{-adapted and }\sup_{0\leq t\leq T}\mathbb{E}|u_2(t)|^i<\infty,i=1,2,\cdots\Big\}.
\end{aligned}
\end{equation}
For any $(u_1(\cdot),u_2(\cdot))\in\mathcal{U}_1\times\mathcal{U}_2$, we know that \eqref{sde} admits a unique solution under hypothesis {\bf(A1)}, which is denoted by $x^{u_1,u_2}(\cdot)\in L^2_\mathcal{F}(0,T;\mathbb{R}^n)$.

From Girsanov's theorem, it follows that if we define
\begin{equation}\label{Z-u1u2}
\begin{aligned}
Z^{u_1,u_2}(t)&:=\exp\bigg\{\int_0^th^{\top}(s,x^{u_1,u_2}(s),u_1(s),u_2(s))dY(s)\\
              &\qquad\qquad-\frac{1}{2}\int_0^T\big|h(s,x^{u_1,u_2}(s),u_1(s),u_2(s))\big|^2ds\bigg\},
\end{aligned}\end{equation}
i.e.,
\begin{equation}\label{dZ}
\left\{
\begin{aligned}
dZ^{u_1,u_2}(t)&=Z^{u_1,u_2}(t)h^{\top}(t,x^{u_1,u_2}(t),u_1(t),u_2(t))dY(t),\ t\in[0,T],\\
  Z^{u_1,u_2}(0)&=1,
\end{aligned}
\right.
\end{equation}
and if $d\mathbb{P}^{u_1,u_2}:=Z^{u_1,u_2}(T)d\mathbb{P}$, then $\mathbb{P}^{u_1,u_2}$ is a new probability and $(W(\cdot),W^{u_1,u_2}(\cdot))$ is an $\mathbb{R}^{d_1+d_2}$-valued Brownian motion under $\mathbb{P}^{u_1,u_2}$.

In our Stackelberg game problem, knowing that the leader has chosen $u_2(\cdot)\in\mathcal{U}_2$, the follower would like to choose an $\mathcal{F}_t^Y$-adapted control $\bar{u}_1(\cdot)\equiv\bar{u}_1(\cdot;u_2(\cdot))$ to minimize his cost functional
\begin{equation}\label{cf1}
J_1(u_1(\cdot),u_2(\cdot))=\mathbb{E}^{u_1,u_2}\bigg[\int_0^Tl_1(t,x^{u_1,u_2}(t),u_1(t),u_2(t))dt+G_1(x^{u_1,u_2}(T))\bigg],
\end{equation}
where $\mathbb{E}^{u_1,u_2}$ denotes the expectation under the probability $\mathbb{P}^{u_1,u_2}$. Here functions $l_1:[0,T]\times\mathbb{R}^n\times U_1\times U_2\rightarrow\mathbb{R}$ and $G_1:\mathbb{R}^n\rightarrow\mathbb{R}$ are given.

{\bf Problem of the follower}. For any chosen $u_2(\cdot)\in\mathcal{U}_2$ by the leader, choose an $\mathcal{F}_t^Y$-adapted control $\bar{u}_1(\cdot)=\bar{u}_1(\cdot;u_2(\cdot))\in\mathcal{U}_1$ such that
\begin{equation}\label{follower}
J_1(\bar{u}_1(\cdot),u_2(\cdot))\equiv J_1(\bar{u}_1(\cdot;u_2(\cdot)),u_2(\cdot))=\inf_{u_1\in\mathcal{U}_1}J_1(u_1(\cdot),u_2(\cdot)),
\end{equation}
subject to \eqref{sde} and \eqref{cf1}. Such a $\bar{u}_1(\cdot)=\bar{u}_1(\cdot;u_2(\cdot))$ is called an optimal control, and the corresponding solution $x^{\bar{u}_1,u_2}(\cdot)$ to \eqref{sde} is called an optimal state process, for the follower.

In the following procedure of the game problem, once knowing that the follower would take such an optimal control $\bar{u}_1(\cdot)=\bar{u}_1(\cdot;u_2(\cdot))$, the leader would like to choose an $\mathcal{F}_t$-adapted control $\bar{u}_2(\cdot)$ to minimize his cost functional
\begin{equation}\label{cf2}
J_2(\bar{u}_1(\cdot),u_2(\cdot))=\mathbb{E}^{\bar{u}_1,u_2}\bigg[\int_0^Tl_2(t,x^{\bar{u}_1,u_2}(t),\bar{u}_1(t),u_2(t))dt+G_2(x^{\bar{u}_1,u_2}(T))\bigg].
\end{equation}
Here functions $l_2:[0,T]\times\mathbb{R}^n\times U_1\times U_2\rightarrow\mathbb{R}$ and $G_2:\mathbb{R}^n\rightarrow\mathbb{R}$ are given.

{\bf Problem of the leader}. Find an $\mathcal{F}_t$-adapted control $\bar{u}_2(\cdot)\in\mathcal{U}_2$ such that
\begin{equation}\label{leader}
J_2(\bar{u}_1(\cdot),\bar{u}_2(\cdot))\equiv J_2(\bar{u}_1(\cdot;\bar{u}_2(\cdot)),\bar{u}_2(\cdot))=\inf_{u_2\in\mathcal{U}_2}J_2(\bar{u}_1(\cdot;u_2(\cdot)),u_2(\cdot)),
\end{equation}
subject to \eqref{sde} and \eqref{cf2}. Such a $\bar{u}_2(\cdot)$ is called an optimal control, and the corresponding solution $x^{\bar{u}_1,\bar{u}_2}(\cdot)$ to \eqref{sde} is called an optimal state process, for the leader. We will restate the problem for the leader in more detail, since its precise description has to involve the solution to {\bf Problem of the follower}.

We refer to the problem mentioned above as a {\it Stackelberg stochastic differential game with asymmetric noisy observations}. If there exists a control process pair $(\bar{u}_1(\cdot),\bar{u}_2(\cdot))$ satisfy \eqref{follower} and \eqref{leader}, we refer to it as an {\it open-loop Stackelberg equilibrium}.

We also introduce the following assumption.

{\bf(A2)}\ For $i=1,2$, the functions $l_i,G_i$ are continuously differentiable with respect to $x,u_1,u_2$, and there exists a constant $C>0$ such that for any $t\in[0,T],x\in\mathbb{R}^n,u_1\in\mathbb{R}^{d_1},u_2\in\mathbb{R}^{d_2}$,
\begin{equation*}\begin{aligned}
&\big(1+|x|^2+|u_1|^2+|u_2|^2\big)^{-1}|l_i(t,x,u_1,u_2)|+\big(1+|x|+|u_1|+|u_2|\big)^{-1}\big(|l_{ix}(t,x,u_1,u_2)|\\
&\quad+|l_{iu_1}(t,x,u_1,u_2)|+|l_{iu_2}(t,x,u_1,u_2)|\big)\leq C,\\
&(1+|x|^2)^{-1}|G_i(x)|+(1+|x|)^{-1}|G_{ix}(x)|\leq C.
\end{aligned}\end{equation*}

\section{Maximum principle and verification theorem for Stackelberg equilibrium}

In this paper, we frequently omit some time variable $t$ in some mathematical formula for simplicity, if there exists no ambiguity.

\subsection{The problem of the follower}

For any chosen $u_2(\cdot)\in\mathcal{U}_2$, we first consider {\bf Problem of the follower} which is a partially observed stochastic optimal control problem.

By Bayes's formula, {\bf Problem of the follower} is equivalent to minimize
\begin{equation}\label{cf11}
J_1(u_1(\cdot),u_2(\cdot))=\mathbb{E}\bigg[\int_0^TZ^{u_1,u_2}(t)l_1(t,x^{u_1,u_2}(t),u_1(t),u_2(t))dt+Z^{u_1,u_2}(T)G_1(x^{u_1,u_2}(T))\bigg]
\end{equation}
over $\mathcal{U}_1$, subject to \eqref{sde} and \eqref{dZ}.

We first present the following lemma about some estimates for $x^{u_1,u_2}(\cdot)$ and $Z(\cdot)$, which belong to Li and Tang \cite{LT95}.
\begin{lemma}\label{lemma3.1}
For any $(u_1(\cdot),u_2(\cdot))\in\mathcal{U}_1\times\mathcal{U}_2$, let $x^{u_1,u_2}(\cdot)$ be the corresponding solution to \eqref{sde}. Then there exists some constant $C>0$, such that
\begin{equation}\label{estimate}
\left\{
\begin{aligned}
&\sup_{0\leq t\leq T}\mathbb{E}|x^{u_1,u_2}(t)|^2\leq C\Big[1+\sup_{0\leq t\leq T}\mathbb{E}\Big(|u_1(t)|^2+|u_2(t)|^2\Big)\Big],\\
&\sup_{0\leq t\leq T}\mathbb{E}|Z^{u_1,u_2}(t)|^2\leq C.
\end{aligned}
\right.
\end{equation}
\end{lemma}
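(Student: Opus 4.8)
The proof proposal for Lemma~\ref{lemma3.1} naturally splits into the two estimates, and the strategy is essentially the classical SDE a priori bound combined with the exponential integrability of the Girsanov density.

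\textbf{Step 1: Estimate for $x^{u_1,u_2}(\cdot)$.} The plan is to apply It\^o's formula to $|x^{u_1,u_2}(t)|^2$, take expectations, and use the linear growth of $b$ and $\sigma$ in hypothesis \textbf{(A1)}. This yields
\begin{equation*}
\mathbb{E}|x^{u_1,u_2}(t)|^2 \leq |x_0|^2 + C\int_0^t\Big[1+\mathbb{E}|x^{u_1,u_2}(s)|^2+\mathbb{E}|u_1(s)|^2+\mathbb{E}|u_2(s)|^2\Big]ds,
\end{equation*}
after using the Cauchy--Schwarz and Young inequalities on the cross terms and the Burkholder--Davis--Gundy inequality (or just the It\^o isometry, since we only take expectation of the square) to dispose of the stochastic integral. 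Then Gronwall's inequality gives
\begin{equation*}
\mathbb{E}|x^{u_1,u_2}(t)|^2 \leq C\Big[1+\int_0^T\big(\mathbb{E}|u_1(s)|^2+\mathbb{E}|u_2(s)|^2\big)ds\Big]e^{CT},
\end{equation*}
and bounding the integral by $T\sup_{0\le s\le T}\mathbb{E}(|u_1(s)|^2+|u_2(s)|^2)$ and taking the supremum over $t$ on the left produces the first inequality in \eqref{estimate}. The admissibility conditions in \eqref{admissible controls} guarantee the right-hand side is finite.

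\textbf{Step 2: Estimate for $Z^{u_1,u_2}(\cdot)$.} Here the key is that $h$ is \emph{uniformly bounded} by the constant $K$ in \textbf{(A1)}. From \eqref{dZ}, It\^o's formula gives $d|Z^{u_1,u_2}(t)|^2 = 2|Z^{u_1,u_2}(t)|^2 h^\top(t,\cdots)dY(t) + |Z^{u_1,u_2}(t)|^2|h(t,\cdots)|^2 dt$, so that after taking expectations (the stochastic integral being a true martingale because $h$ is bounded, which can be justified by a standard localization argument) one gets
\begin{equation*}
\mathbb{E}|Z^{u_1,u_2}(t)|^2 = 1 + \int_0^t\mathbb{E}\big[|Z^{u_1,u_2}(s)|^2|h(s,\cdots)|^2\big]ds \leq 1 + K^2\int_0^t\mathbb{E}|Z^{u_1,u_2}(s)|^2\,ds,
\end{equation*}
and Gronwall's inequality yields $\sup_{0\le t\le T}\mathbb{E}|Z^{u_1,u_2}(t)|^2 \le e^{K^2 T} =: C$, which is the second inequality. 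Alternatively, one may note directly that $Z^{u_1,u_2}(t)^2 = \exp\{2\int_0^t h^\top dY - \int_0^t|h|^2 ds\}$ can be rewritten as the product of the exponential martingale $\exp\{\int_0^t 2h^\top dY - \tfrac12\int_0^t|2h|^2 ds\}$ and the factor $\exp\{\int_0^t |h|^2 ds\} \le e^{K^2 T}$, so its expectation is at most $e^{K^2 T}$.

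\textbf{Main obstacle.} There is no serious obstacle here; the result is standard and is explicitly attributed to Li and Tang \cite{LT95}. The only points requiring a little care are the martingale property of the stochastic integrals appearing after applying It\^o's formula (handled by localization, using boundedness of $h$ for the $Z$-equation and the already-established $L^2$ bound together with boundedness of the derivatives of $\sigma$ for the $x$-equation), and ensuring the constant $C$ in the statement is uniform in $(u_1,u_2)$, which it is since it depends only on $T$, $|x_0|$, $K$, and the growth/derivative bounds from \textbf{(A1)}, not on the particular admissible controls.
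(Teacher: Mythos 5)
Your proof is correct: the Gronwall argument for the first estimate and the exponential-martingale (or linear-SDE-plus-Gronwall) argument for the second are exactly the standard route, and your attention to localization and to the uniformity of the constant is appropriate. The paper itself omits the proof entirely, simply attributing the lemma to Li and Tang \cite{LT95}, so your write-up is a faithful reconstruction of the argument the authors are relying on rather than a departure from it.
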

The following maximum principle for {\bf Problem of the follower} can be obtained by the classical results in \cite{Ben92} and \cite{LT95}. We omit the details.
\begin{theorem}\label{thm3.1}
Let {\bf(A1)} and {\bf(A2)} hold. For any given $u_2(\cdot)\in\mathcal{U}_2$, if $\bar{u}_1(\cdot)$ is an optimal control of {\bf Problem of the follower}, then the maximum condition
\begin{equation}\label{maximum condition}
\mathbb{E}^{\bar{u}_1,u_2}\big[\big\langle H_{1u_1}(t,x^{\bar{u}_1,u_2},\bar{u}_1,u_2,p,k,K),v_1-\bar{u}_1(t)\big\rangle\big|\mathcal{F}_t^Y\big]\geq0,\ \ \forall v_1\in \mathcal{U}_1,
\end{equation}
holds for a.e. $t\in[0,T]$, $\mathbb{P}^{\bar{u}_1,u_2}$-a.s., where the Hamiltonian function $H_1:[0,T]\times\mathbb{R}^n\times U_1\times U_2\times\mathbb{R}^n
\times\mathbb{R}^{n\times d_1}\times\mathbb{R}^{n\times d_2}\rightarrow\mathbb{R}$ is defined by
\begin{equation}\label{H1}
\begin{aligned}
H_1(t,x^{u_1,u_2},u_1,u_2,p,k,K)&:=\big\langle p(t),b(t,x^{u_1,u_2},u_1,u_2)\big\rangle+{\rm tr}\big\{k(t)^\top\sigma(t,x^{u_1,u_2},u_1,u_2)\big\}\\
&\quad+\big\langle K(t),h(t,x^{u_1,u_2},u_1,u_2)\big\rangle+l_1(t,x^{u_1,u_2},u_1,u_2),
\end{aligned}
\end{equation}
the adjoint process pairs $(P(\cdot),K(\cdot))\in L^2_\mathcal{F}(0,T;\mathbb{R})\times L^2_\mathcal{F}(0,T;\mathbb{R}^{d_1})$ and $(p(\cdot),k(\cdot))\in L^2_\mathcal{F}(0,T;\mathbb{R}^n)\\\times L^2_\mathcal{F}(0,T;\mathbb{R}^{n\times d_1})$ satisfy the following two BSDEs, respectively:
\begin{equation}\label{adjoint1}
\left\{
\begin{aligned}
-dP(t)&=l_1(t,x^{\bar{u}_1,u_2},\bar{u}_1,u_2)dt-K^\top(t)dW^{\bar{u}_1,u_2}(t),\ t\in[0,T],\\
P(T)&=G_1(x^{\bar{u}_1,u_2}(T)),
\end{aligned}
\right.
\end{equation}
\begin{equation}\label{adjoint11}
\left\{
\begin{aligned}
-dp(t)&=\big[l_{1x}(t,x^{\bar{u}_1,u_2},\bar{u}_1,u_2)+h_x^\top(t,x^{\bar{u}_1,u_2},\bar{u}_1,u_2)K^\top(t)+b_x(t,x^{\bar{u}_1,u_2},\bar{u}_1,u_2)p(t)\\
&\quad+\sigma_x(t,x^{\bar{u}_1,u_2},\bar{u}_1,u_2)k(t)\big]dt-k(t)dW(t),\ t\in[0,T],\\
p(T)&=G_{1x}(x^{\bar{u}_1,u_2}(T)).
\end{aligned}
\right.
\end{equation}
\end{theorem}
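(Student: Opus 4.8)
The plan is to adapt the classical partially-observed stochastic maximum principle of Bensoussan \cite{Ben92} and Li and Tang \cite{LT95} to the equivalent formulation \eqref{cf11}, in which the cost functional has been rewritten under the reference probability $\mathbb{P}$ using Bayes' formula, and the state is enlarged to the pair $(x^{u_1,u_2}(\cdot),Z^{u_1,u_2}(\cdot))$ governed by \eqref{sde} and \eqref{dZ}. First I would fix the leader's control $u_2(\cdot)\in\mathcal{U}_2$ once and for all, let $\bar u_1(\cdot)$ be optimal with optimal trajectory $(\bar x(\cdot),\bar Z(\cdot)):=(x^{\bar u_1,u_2}(\cdot),Z^{\bar u_1,u_2}(\cdot))$, and perform a convex perturbation $u_1^\rho(\cdot)=\bar u_1(\cdot)+\rho\big(v_1(\cdot)-\bar u_1(\cdot)\big)$ for $v_1\in\mathcal{U}_1$ and $\rho\in(0,1)$; convexity of $U_1$ keeps $u_1^\rho(\cdot)$ admissible.

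The key steps, in order, are: (i) establish the variational equations — differentiate the maps $\rho\mapsto x^{u_1^\rho,u_2}(\cdot)$ and $\rho\mapsto Z^{u_1^\rho,u_2}(\cdot)$ at $\rho=0$ to obtain processes $x_1(\cdot)$ and $Z_1(\cdot)$ solving the linearized SDEs, with the $L^2$ convergence estimates following from Lemma \ref{lemma3.1} and the uniform bounds in \textbf{(A1)}, \textbf{(A2)}; (ii) compute the Gâteaux derivative of the cost, $\frac{d}{d\rho}J_1(u_1^\rho,u_2)\big|_{\rho=0}\geq 0$, expressing it as an expectation of terms involving $x_1(\cdot)$, $Z_1(\cdot)$ and the derivatives of $l_1$, $G_1$; (iii) introduce the two adjoint BSDEs — equation \eqref{adjoint1} for the pair $(P(\cdot),K(\cdot))$, which is the adjoint associated with the $Z$-component (its driver is just $l_1$ and its terminal value $G_1(\bar x(T))$, reflecting that $Z$ appears linearly), and equation \eqref{adjoint11} for $(p(\cdot),k(\cdot))$, the adjoint associated with the $x$-component, whose driver picks up the extra term $h_x^\top K^\top$ precisely because of the $Z$-dynamics in \eqref{dZ}; (iv) apply Itô's formula to $\langle p(t),\bar Z(t)x_1(t)\rangle + P(t)Z_1(t)$ (or the appropriate bilinear pairing) on $[0,T]$, take expectations, and use the adjoint equations to cancel all the terms involving $x_1$ and $Z_1$, reducing the variational inequality to $\mathbb{E}\int_0^T \bar Z(t)\big\langle H_{1u_1}(t,\bar x,\bar u_1,u_2,p,k,K),\,v_1(t)-\bar u_1(t)\big\rangle\,dt\geq 0$; (v) convert back to $\mathbb{P}^{\bar u_1,u_2}$ via $d\mathbb{P}^{\bar u_1,u_2}=\bar Z(T)d\mathbb{P}$ and $\bar Z(t)=\mathbb{E}[\bar Z(T)\mid\mathcal{F}_t]$, and exploit the fact that $v_1(\cdot)-\bar u_1(\cdot)$ ranges over $\mathcal{F}_t^Y$-adapted processes to localize the inequality and pass the conditional expectation $\mathbb{E}^{\bar u_1,u_2}[\,\cdot\mid\mathcal{F}_t^Y]$ inside, arriving at \eqref{maximum condition} for a.e.\ $t$, $\mathbb{P}^{\bar u_1,u_2}$-a.s.

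I expect the main obstacle to be step (v), the passage from the integrated (global) variational inequality to the pointwise conditional maximum condition: because the admissible controls are constrained to be $\mathcal{F}_t^Y$-adapted rather than $\mathcal{F}_t$-adapted, one cannot simply use arbitrary bounded measurable perturbations; one must take $v_1(\cdot)-\bar u_1(\cdot)$ of the form $\mathbf{1}_{[s,s+\varepsilon]}(t)\,\xi$ with $\xi$ bounded and $\mathcal{F}_s^Y$-measurable, invoke Bayes' formula to rewrite $\mathbb{E}[\bar Z(t)\,\langle\cdots\rangle\,\xi]$ as $\mathbb{E}^{\bar u_1,u_2}[\langle\cdots\rangle\,\xi]$, and then use a Lebesgue-point argument together with the arbitrariness of $\xi$ and the tower property to extract the $\mathcal{F}_t^Y$-conditional expectation. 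The measurability bookkeeping and the justification that the relevant processes are integrable enough for these manipulations (guaranteed by the moment conditions in the definition \eqref{admissible controls} of $\mathcal{U}_1$ and by Lemma \ref{lemma3.1}) is the delicate part; since the theorem statement notes this is "direct from" \cite{Ben92,LT95}, I would present the variational analysis and the duality computation in detail and cite those references for the filtering-type localization argument in step (v).
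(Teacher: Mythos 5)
Your outline is correct and follows exactly the route the paper intends: the paper itself omits this proof, deferring to the classical results of Bensoussan and Li--Tang, and your steps (i)--(v) --- convex perturbation of the enlarged state $(x^{u_1,u_2}(\cdot),Z^{u_1,u_2}(\cdot))$ under the reference measure $\mathbb{P}$, the two adjoint BSDEs \eqref{adjoint1} and \eqref{adjoint11} with the $h_x^\top K^\top$ coupling term, the duality computation via It\^{o}'s formula, and the Lebesgue-point localization using $\mathcal{F}_t^Y$-measurable perturbations to pass to the conditional maximum condition --- are precisely the argument being cited. No discrepancy to report.
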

Then we continue to give the sufficient condition (that is, verification theorem) to guarantee the optimality for control $\bar{u}_1(\cdot)$ of {\bf Problem of the follower}.
\begin{theorem}\label{thm3.2}
Let {\bf(A1)} and {\bf(A2)} hold. For any given $u_2(\cdot)\in\mathcal{U}_2$, let $\bar{u}_1(\cdot)\in\mathcal{U}_1$ and $x^{\bar{u}_1.u_2}(\cdot)$ be the corresponding state. Let $(P(\cdot),K(\cdot))$ and $(p(\cdot),k(\cdot))$ be the adjoint process pairs satisfying \eqref{adjoint1} and \eqref{adjoint11}. Suppose for all $(t,x,u_1,u_2)\in[0,T]\times\mathbb{R}^n\times U_1\times U_2$, $Z^{u_1,u_2}(t)$ is $\mathcal{F}_t^Y$-adapted, maps $(x,u_1)\rightarrow H_1(t,x,u_1,u_2,p,k,K)$ and $x\rightarrow G_1(x)$ are both convex, and
\begin{equation}\label{min}
\mathbb{E}\big[H_1(t,x^{\bar{u}_1,u_2},\bar{u}_1,u_2,p,k,K)\big|\mathcal{F}_t^Y\big]
=\min_{u_1\in\mathcal{U}_1}\mathbb{E}\big[H_1(t,x^{u_1.u_2},u_1,u_2,p,k,K)\big|\mathcal{F}_t^Y\big]
\end{equation}
holds for a.e. $t\in[0,T]$, $\mathbb{P}$-a.s. Then $\bar{u}_1(\cdot)$ is an optimal control of {\bf Problem of the follower}.
\end{theorem}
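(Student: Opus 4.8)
The plan is to run the classical convex--duality verification argument on the Girsanov--reformulated cost \eqref{cf11}, exploiting that the two driving noises $W$ and $Y$ are independent. Fix $u_2(\cdot)\in\mathcal{U}_2$, let $u_1(\cdot)\in\mathcal{U}_1$ be arbitrary with state $x^{u_1,u_2}(\cdot)$ and density $Z^{u_1,u_2}(\cdot)$, abbreviate $\bar x:=x^{\bar u_1,u_2}$, $\bar Z:=Z^{\bar u_1,u_2}$, $\xi:=x^{u_1,u_2}-\bar x$ (so $\xi(0)=0$), and aim at $J_1(u_1,u_2)-J_1(\bar u_1,u_2)\geq0$. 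The first step is the identity $J_1(\bar u_1,u_2)=P(0)$: applying It\^{o}'s formula to $\bar Z(t)P(t)$, using \eqref{dZ}, \eqref{adjoint1} and $dW^{\bar u_1,u_2}=dY-h(t,\bar x,\bar u_1,u_2)dt$, the quadratic--covariation term cancels the $K^{\top}h$ drift, so $\mathbb{E}[\bar Z(T)P(T)]=P(0)-\mathbb{E}\int_0^T\bar Z\,l_1(t,\bar x,\bar u_1,u_2)dt$; since $\bar Z$ is a $\mathbb{P}$--martingale and $P(T)=G_1(\bar x(T))$, Bayes' formula rewrites this as $\mathbb{E}^{\bar u_1,u_2}[G_1(\bar x(T))]=P(0)-\mathbb{E}^{\bar u_1,u_2}\int_0^T l_1\,dt$, i.e. $J_1(\bar u_1,u_2)=P(0)$.

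Next I would estimate $J_1(u_1,u_2)$ from below. Convexity of $G_1$ and $Z^{u_1,u_2}(T)>0$ give $Z^{u_1,u_2}(T)G_1(x^{u_1,u_2}(T))\geq Z^{u_1,u_2}(T)[P(T)+\langle p(T),\xi(T)\rangle]$. Then I would apply It\^{o}'s formula to $Z^{u_1,u_2}(t)P(t)$ and to $Z^{u_1,u_2}(t)\langle p(t),\xi(t)\rangle$. The key structural point is that $Z^{u_1,u_2}$ is driven only by $Y$ while $p$ and $\xi$ are driven only by $W$, and $W\perp Y$, so the cross--variation brackets with $Z^{u_1,u_2}$ vanish; collecting the surviving quadratic--variation terms, the drifts of \eqref{adjoint1}--\eqref{adjoint11}, the running cost $Z^{u_1,u_2}l_1$, and the identity \eqref{H1}, the $h$-- and $l_1$--type contributions cancel in pairs and, combining with $J_1(\bar u_1,u_2)=P(0)$, one is left with
\begin{equation*}
J_1(u_1,u_2)-J_1(\bar u_1,u_2)\geq\mathbb{E}\int_0^TZ^{u_1,u_2}(t)\Big[H_1(t,x^{u_1,u_2},u_1,u_2,p,k,K)-H_1(t,\bar x,\bar u_1,u_2,p,k,K)-\big\langle H_{1x}(t,\bar x,\bar u_1,u_2,p,k,K),\xi(t)\big\rangle\Big]dt .
\end{equation*}

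It remains to prove the last integral is nonnegative, and this is where convexity of $H_1$ and the minimum condition \eqref{min} enter. By convexity of $(x,u_1)\mapsto H_1(t,x,u_1,u_2,p,k,K)$ the bracket dominates $\langle H_{1u_1}(t,\bar x,\bar u_1,u_2,p,k,K),u_1(t)-\bar u_1(t)\rangle$, so it suffices to show $\mathbb{E}\int_0^TZ^{u_1,u_2}(t)\langle H_{1u_1}(t,\bar x,\bar u_1,u_2,p,k,K),u_1(t)-\bar u_1(t)\rangle dt\geq0$. Restricting the competitors in \eqref{min} to controls coinciding with $\bar u_1$ on $[0,t)$ --- for which the state at time $t$ is still $\bar x(t)$ --- reduces \eqref{min} to: $\bar u_1(t)$ minimizes the convex map $v\mapsto\mathbb{E}[H_1(t,\bar x,v,u_2,p,k,K)\,|\,\mathcal{F}_t^Y]$ over $\mathcal{F}_t^Y$--measurable $U_1$--valued $v$; its first--order optimality condition gives $\langle\mathbb{E}[H_{1u_1}(t,\bar x,\bar u_1,u_2,p,k,K)\,|\,\mathcal{F}_t^Y],v-\bar u_1(t)\rangle\geq0$ a.s. for all such $v$, in particular $v=u_1(t)$. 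Since $Z^{u_1,u_2}(t),u_1(t),\bar u_1(t)$ are $\mathcal{F}_t^Y$--measurable and $Z^{u_1,u_2}(t)>0$, taking conditional expectations inside the integral yields the required nonnegativity, so $\bar u_1(\cdot)$ is optimal.

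I expect the main obstacle to be the middle step: keeping the density $Z^{u_1,u_2}$ inside every It\^{o} expansion, verifying that the $Y$--$W$ cross--variations truly vanish, that all $h$-- and $l_1$--weighted terms cancel so only the clean $H_1$--increment survives, and --- most delicately --- that every local--martingale term is a genuine martingale, which rests on the $L^2$ and polynomial--moment bounds of Lemma \ref{lemma3.1} and hypotheses {\bf(A1)}--{\bf(A2)} together with the assumed $\mathcal{F}_t^Y$--measurability of $Z^{u_1,u_2}$ (which is also what legitimizes Bayes' formula in the first step and what lets conditional expectations under $\mathbb{P}$ and under $\mathbb{P}^{u_1,u_2}$ be used interchangeably on $\mathcal{F}_t^Y$). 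If \eqref{min} is read with the state genuinely varying, the reduction to the pointwise first--order condition must instead be obtained by a convex perturbation of $\bar u_1$ together with the adjoint equation \eqref{adjoint11}, which essentially re-proves Theorem \ref{thm3.1}.
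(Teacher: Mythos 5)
Your proposal is correct and follows essentially the same convex-duality verification argument as the paper: convexity of $G_1$ and of $H_1$ in $(x,u_1)$, It\^{o}/duality computations pairing the density $Z^{u_1,u_2}$ with $(P,K)$ and the state increment with $(p,k)$, reduction to the integrated term $\mathbb{E}\int_0^T Z^{u_1,u_2}\langle H_{1u_1}(\bar{\cdot}),u_1-\bar u_1\rangle dt$, and then the first-order condition from \eqref{min} combined with the assumed $\mathcal{F}_t^Y$-adaptedness and positivity of $Z^{u_1,u_2}$. The only differences are organizational (you use $J_1(\bar u_1,u_2)=P(0)$ and keep $Z^{u_1,u_2}$ inside the It\^{o} expansions, invoking $W\perp Y$ to kill cross-variations, where the paper splits $J_1-\bar J_1$ into two pieces and works under $\mathbb{E}^{u_1,u_2}$), and your explicit reading of \eqref{min} with the state frozen at $\bar x$ is in fact the more careful version of the step the paper performs in \eqref{ZZZ}.
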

\begin{proof}
For any $u_1(\cdot)\in\mathcal{U}_1$, we have
\begin{equation}\label{J1-barJ1}
\begin{aligned}
&J_1(u_1(\cdot),u_2(\cdot))-J_1(\bar{u}_1(\cdot),u_2(\cdot))\\
&=\mathbb{E}\bigg[\int_0^T\big[Z^{u_1,u_2}(t)l_1(t,x^{u_1,u_2},u_1,u_2)-Z^{\bar{u}_1,u_2}(t)l_1(t,x^{\bar{u}_1,u_2},\bar{u}_1,u_2)\big]dt\\
&\quad+Z^{u_1,u_2}(T)G_1(x^{u_1,u_2}(T))-Z^{\bar{u}_1,u_2}(T)G_1(x^{\bar{u}_1,u_2}(T))\bigg]\\
&=\mathbb{E}\bigg[\int_0^T\big[\big(Z^{u_1,u_2}(t)-Z^{\bar{u}_1,u_2}(t)\big)l_1(t,x^{\bar{u}_1,u_2},\bar{u}_1,u_2)\big]dt\\
&\qquad+\big(Z^{u_1,u_2}(T)-Z^{\bar{u}_1,u_2}(T)\big)G_1(x^{\bar{u}_1,u_2}(T))\bigg]\\
&\quad+\mathbb{E}\bigg[\int_0^T\big[Z^{u_1,u_2}(t)\big(l_1(t,x^{u_1,u_2},u_1,u_2)-l_1(t,x^{\bar{u}_1,u_2},\bar{u}_1,u_2)\big)\big]dt\\
&\qquad+Z^{u_1,u_2}(T)\big(G_1(x^{u_1,u_2}(T))-G_1(x^{\bar{u}_1,u_2}(T))\big)\bigg]\equiv\rm{I+II}.
\end{aligned}
\end{equation}
Due to the convexity of $G_1(\cdot)$, we have
\begin{equation}\label{II}
\begin{aligned}
\rm{II}&\geq\mathbb{E}^{u_1,u_2}\bigg[\int_0^T\big[l_1(t,x^{u_1,u_2},u_1,u_2)-l_1(t,x^{\bar{u}_1,u_2},\bar{u}_1,u_2)\big]dt\\
&\qquad+G_{1x}(x^{\bar{u}_1,u_2}(T))(x^{u_1,u_2}(T)-x^{\bar{u}_1,u_2}(T))\bigg].
\end{aligned}
\end{equation}
Applying It\^{o}'s formula to $(Z^{u_1,u_2}(\cdot)-Z^{\bar{u}_1,u_2}(\cdot))P(\cdot)$, it is easy to get
\begin{equation}\label{I}
\begin{aligned}
\rm{I}&=\mathbb{E}\bigg[\int_0^TZ^{u_1,u_2}(t)\big[h^{\top}(t,x^{u_1,u_2},u_1,u_2)-h^\top(t,x^{\bar{u}_1,u_2},\bar{u}_1,u_2)\big]K(t)dt\bigg]\\
&=\mathbb{E}^{u_1,u_2}\bigg[\int_0^T\big\langle K(t),h(t,x^{u_1,u_2},u_1,u_2)-h(t,x^{\bar{u}_1,u_2},\bar{u}_1,u_2)\big\rangle dt\bigg].
\end{aligned}
\end{equation}
Similarly, applying It\^{o}'s formula to $p(\cdot)(x^{u_1,u_2}(\cdot)-x^{\bar{u}_1,u_2}(\cdot))$, by \eqref{II}, we have
\begin{equation}\label{II-II}
\begin{aligned}
{\rm II}\geq\mathbb{E}^{u_1,u_2}\bigg[\int_0^T&\big[H_1(t,x^{u_1,u_2},u_1,u_2,p,k,K)-H_1(t,x^{\bar{u}_1,u_2},\bar{u}_1,u_2,p,k,K)\\
&-\big\langle K(t),h(t,x^{u_1.u_2},u_1,u_2)-h(t,x^{\bar{u}_1.u_2},\bar{u}_1,u_2)\big\rangle\\
&-\big\langle H_{1x}(t,x^{\bar{u}_1,u_2},\bar{u}_1,u_2,p,k,K),x^{u_1,u_2}(t)-x^{\bar{u}_1,u_2}(t)\big\rangle\big]dt\bigg].
\end{aligned}
\end{equation}
Using the convexity of $H_1(t,\cdot,\cdot,u_2,p,k,K)$, by \eqref{I} and \eqref{II-II}, we obtain
\begin{equation}\label{I+II}
\begin{aligned}
\rm{I+II}&\geq\mathbb{E}^{u_1,u_2}\bigg[\int_0^T\big\langle H_{1u_1}(t,x^{\bar{u}_1,u_2},\bar{u}_1,u_2,p,k,K),u_1-\bar{u}_1\big\rangle dt\bigg]\\
&=\mathbb{E}\bigg[\int_0^T\mathbb{E}\big[Z^{u_1,u_2}(t)\big\langle H_{1u_1}(t,x^{\bar{u}_1,u_2},\bar{u}_1,u_2,p,k,K),u_1-\bar{u}_1\big\rangle\big|\mathcal{F}_t^Y\big]dt\bigg].
\end{aligned}
\end{equation}
Noticing that $Z^{u_1,u_2}(\cdot)>0$ is $\mathcal{F}_t^Y$-adapted, by the condition \eqref{min}, we get
\begin{equation}\label{ZZZ}
\begin{aligned}
0&\leq\Big\langle\frac{\partial}{\partial u_1}\mathbb{E}\big[H_1(t,x^{\bar{u}_1.u_2},\bar{u}_1,u_2,p,k,K)\big|\mathcal{F}_t^Y\big],u_1-\bar{u}_1\Big\rangle\\
&=\mathbb{E}\big[\big\langle H_{1u_1}(t,x^{\bar{u}_1.u_2},\bar{u}_1,u_2,p,k,K),u_1-\bar{u}_1\big\rangle\big|\mathcal{F}_t^Y\big].
\end{aligned}
\end{equation}
Thus from \eqref{J1-barJ1} we have $J_1(u_1(\cdot),u_2(\cdot))-J_1(\bar{u}_1(\cdot),u_2(\cdot))\geq0$, for any $u_1(\cdot)\in\mathcal{U}_1$. Then we complete our proof.
\end{proof}

\subsection{The problem of the leader}

In this subsection, we firstly state the stochastic optimal control problem with complete information of the leader in detail, then we give the maximum principle and verification theorem for it.
For any $u_2(\cdot)\in\mathcal{U}_2$, by the maximum condition \eqref{maximum condition}, we assume that a functional $\bar{u}_1(t)=\bar{u}_1(t;\hat{x}^{\bar{u}_1,\hat{u}_2},\hat{u}_2,\hat{p},\hat{k},\hat{K})$ is uniquely defined, where we set
\begin{equation}\label{hathat}
\begin{aligned}
&\hat{x}^{\bar{u}_1,\hat{u}_2}(t):=\mathbb{E}^{\bar{u}_1,u_2}\big[x^{\bar{u}_1,u_2}(t)\big|\mathcal{F}_t^Y\big],\ \ \hat{\phi}(t):=\mathbb{E}^{\bar{u}_1,u_2}\big[\phi(t)\big|\mathcal{F}_t^Y\big],\ t\in[0,T],
\end{aligned}
\end{equation}
for $\phi=u_2,p,k,K$. Firstly, the leader encounters the controlled system of FBSDEs:
\begin{equation}\label{leader state0}
\left\{
\begin{aligned}
dx^{\bar{u}_1,u_2}(t)&=b(t,x^{\bar{u}_1,u_2},\bar{u}_1,u_2)dt+\sigma(t,x^{\bar{u}_1,u_2},\bar{u}_1,u_2)dW(t),\\
dZ^{\bar{u}_1,u_2}(t)&=Z^{\bar{u}_1,u_2}(t)h^{\top}(t,x^{\bar{u}_1,u_2},\bar{u}_1,u_2)dY(t),\\
-dp(t)&=\big[l_{1x}(t,x^{\bar{u}_1,u_2},\bar{u}_1,u_2)+h_x^{\top}(t,x^{\bar{u}_1,u_2},\bar{u}_1,u_2)K^\top(t)\\
&\quad+b_x(t,x^{\bar{u}_1,u_2},\bar{u}_1,u_2)p(t)+\sigma_x(t,x^{\bar{u}_1,u_2},\bar{u}_1,u_2)k(t)\big]dt-k(t)dW(t),\\
-dP(t)&=\big[l_1(t,x^{\bar{u}_1,u_2},\bar{u}_1,u_2)+K(t)h(t,x^{\bar{u}_1,u_2},\bar{u}_1,u_2)\big]dt-K(t)dY(t),\ t\in[0,T],\\
  x^{\bar{u}_1,u_2}(0)&=x_0,\ Z^{\bar{u}_1,u_2}(0)=1,\ p(T)=G_{1x}(x^{\bar{u}_1,u_2}(T)),\ P(T)=G_1(x^{\bar{u}_1,u_2}(T)).
\end{aligned}
\right.
\end{equation}
For the simplicity of notations, we denote $x^{u_2}(\cdot)\equiv x^{\bar{u}_1,u_2}(\cdot),Z^{u_2}(\cdot)\equiv Z^{\bar{u}_1,u_2}(\cdot)$ and define $\Phi^L$ on $[0,T]\times\mathbb{R}^n\times U_2$ as
$\Phi^L(t,x^{u_2},u_2):=\Phi(t;x^{\bar{u}_1,u_2},\bar{u}_1(t;\hat{x}^{\bar{u}_1,\hat{u}_2},\hat{u}_2,\hat{p},\hat{k},\hat{K}),u_2)$, for $\Phi=b,\sigma,h,l_1,l_2$. Thus the leader's state equation \eqref{leader state0} is equivalent to:
\begin{equation}\label{leader state1}
\left\{
\begin{aligned}
dx^{u_2}(t)&=b^L(t,x^{u_2},u_2)dt+\sigma^L(t,x^{u_2},u_2)dW(t),\\
dZ^{u_2}(t)&=Z^{u_2}(t)h^L(t,x^{u_2},u_2)^\top dY(t),\\
-dp(t)&=f_1^L(t,x^{u_2},u_2,p,k,K)dt-k(t)dW(t),\\
-dP(t)&=f_2^L(t,x^{u_2},u_2,K)dt-K(t)dY(t),\quad t\in[0,T],\\
  x^{u_2}(0)&=x_0,\ Z^{u_2}(0)=1,\ p(T)=G_{1x}(x^{u_2}(T)),\ P(T)=G_1(x^{u_2}(T)).
\end{aligned}
\right.
\end{equation}
where we define
\begin{equation*}
\begin{aligned}
f_1^L(t,x^{u_2},u_2,p,k,K)&:=l_{1x}^L(t,x^{u_2},u_2)+h_x^L(t,x^{u_2},u_2)^\top K^\top(t)\\
                          &\qquad+b_x^L(t,x^{u_2},u_2)p(t)+\sigma_x^L(t,x^{u_2},u_2)k(t),\\
     f_2^L(t,x^{u_2},u_2,K)&:=l_1^L(t,x^{u_2},u_2)+K(t)h^L(t,x^{u_2},u_2).
\end{aligned}
\end{equation*}

We note that \eqref{leader state1} is a controlled {\it conditional mean-field} FBSDEs, which now is regarded as the ``state" equation of the leader. That is to say, the state of the leader is the six-tuple $(x^{u_2}(\cdot),Z^{u_2}(\cdot),p(\cdot),k(\cdot),P(\cdot),K(\cdot))$. By \eqref{cf2}, we define
\begin{equation}\label{cf22}
\begin{aligned}
&J_2^L(u_2(\cdot)):=J_2(\bar{u}_1(\cdot),u_2(\cdot))\\
&=\mathbb{E}^{\bar{u}_1,u_2}\bigg[\int_0^Tl_2(t,x^{\bar{u}_1,u_2}(t),\bar{u}_1(t),u_2(t))dt+G_2(x^{\bar{u}_1,u_2}(T))\bigg]\\
&\equiv\mathbb{E}^{u_2}\bigg[\int_0^Tl_2^L(t,x^{u_2}(t),u_2(t))dt+G_2(x^{u_2}(T))\bigg]\\
&=\mathbb{E}\bigg[\int_0^TZ^{u_2}(t)l_2^L(t,x^{u_2}(t),u_2(t))dt+Z^{u_2}(T)G_2(x^{u_2}(T))\bigg].
\end{aligned}
\end{equation}

Suppose $\bar{u}_2(\cdot)$ is an optimal control of {\bf Problem of the leader}, and the associated optimal state $(x^{\bar{u}_2}(\cdot),Z^{\bar{u}_2}(\cdot),\bar{p}(\cdot),\bar{k}(\cdot),\bar{P}(\cdot),\bar{K}(\cdot))$ satisfies \eqref{leader state1} with respect to $\bar{u}_2(\cdot)$. In order to derive the maximum principle, we define the perturbed control $u_2^\theta(t):=\bar{u}_2(t)+\theta(v_2(t)-\bar{u}_2(t))$, for $t\in[0,T]$, where $\theta>0$ is sufficiently small and $v_2(\cdot)$ is an arbitrary element of $\mathcal{U}_2$. The convexity of $U_2$ implies that $u_2^\theta(\cdot)\in\mathcal{U}_2$. Let $(x^\theta(\cdot),Z^\theta(\cdot),p^\theta(\cdot),k^\theta(\cdot),P^\theta(\cdot),K^\theta(\cdot))$ be the state corresponding to $u_2^\theta(\cdot)$.
Keeping in mind that $b^L,\sigma^L,h^L,l_1^L,l_2^L$ depend on not only $(x^{\bar u_1,u_2},u_2)$ but also $(\hat{x}^{\bar u_1,\hat u_2},\hat u_2,\hat p,\hat k,\hat K)$. Then we introduce the following system of variational equations whose solution is the six-tuple $(x^1(\cdot),Z^1(\cdot),p^1(\cdot),k^1(\cdot),P^1(\cdot),K^1(\cdot))$:
\begin{equation}\label{variational eq}
\left\{
\begin{aligned}
 dx^1(t)&=\big[\bar{b}_x^Lx^1+\bar{b}_{\hat{x}}^L\hat{x}^1+\bar{b}_{\hat{p}}^L\hat{p}^1+\bar{b}_{\hat{k}}^L\hat{k}^1+\bar{b}_{\hat{K}}^L\hat{K}^1
         +\bar{b}_{u_2}^L(v_2-\bar{u}_2)+\bar{b}_{\hat{u}_2}^L(\hat{v}_2-\hat{\bar{u}}_2)\big]dt\\
        &\quad+\big[\bar{\sigma}_x^Lx^1+\bar{\sigma}_{\hat{x}}^L\hat{x}^1+\bar{\sigma}_{\hat{p}}^L\hat{p}^1
         +\bar{\sigma}_{\hat{k}}^L\hat{k}^1+\bar{\sigma}_{\hat{K}}^L\hat{K}^1+\bar{\sigma}_{u_2}^L(v_2-\bar{u}_2)+\bar{\sigma}_{\hat{u}_2}^L(\hat{v}_2-\hat{\bar{u}}_2)\big]dW(t),\\
 dZ^1(t)&=\big\{\bar{h}^{L\top}Z^1+Z^{\bar{u}_2}\big[\bar{h}_x^{L\top}x^1+\bar{h}_{\hat{x}}^{L\top}\hat{x}^1
         +\bar{h}_{\hat{p}}^{L\top}\hat{p}^1+\bar{h}_{\hat{k}}^{L\top}\hat{k}^1+\bar{h}_{\hat{K}}^{L\top}\hat{K}^1\\
        &\quad+\bar{h}_{u_2}^{L\top}(v_2-\bar{u}_2)+\bar{h}_{\hat{u}_2}^{L\top}(\hat{v}_2-\hat{\bar{u}}_2)\big]\big\}dY(t),\\
-dp^1(t)&=\big[\bar{f}_{1x}^Lx^1+\bar{f}_{1\hat{x}}^L\hat{x}^1+\bar{f}_{1p}^Lp^1+\bar{f}_{1\hat{p}}^L\hat{p}^1+\bar{f}_{1k}^Lk^1+\bar{f}_{1\hat{k}}^L\hat{k}^1
         +\bar{f}_{1K}^LK^1+\bar{f}_{1\hat{K}}^L\hat{K}^1\\
        &\quad+\bar{f}_{1u_2}^L(v_2-\bar{u}_2)+\bar{f}_{1\hat{u}_2}^L(\hat{v}_2-\hat{\bar{u}}_2)\big]dt-k^1(t)dW(t),\\
-dP^1(t)&=\big[\bar{f}_{2x}^Lx^1+\bar{f}_{2\hat{x}}^L\hat{x}^1+\bar{f}_{2\hat{p}}^L\hat{p}^1+\bar{f}_{2\hat{k}}^L\hat{k}^1+\bar{f}_{2K}^LK^1+\bar{f}_{2\hat{K}}^L\hat{K}^1\\
        &\quad+\bar{f}_{2u_2}^L(v_2-\bar{u}_2)+\bar{f}_{2\hat{u}_2}^L(\hat{v}_2-\hat{\bar{u}}_2)\big]dt-K^1(t)dY(t),\quad t\in[0,T],\\
  x^1(0)&=0,\ Z^1(0)=0,\ p^1(T)=G_{1xx}(x^{\bar{u}_2}(T))x^1(T),\ P^1(T)=G_{1x}(x^{\bar{u}_2}(T))x^1(T),
\end{aligned}
\right.
\end{equation}
where we have used $\bar{\Lambda}^L(t)\equiv\Lambda^L(t,x^{\bar{u}_2},\bar{u}_2)$ for $\Lambda=b,\sigma,h,f_1,f_2,l_1,l_2$ and all their partial derivatives.

%%%%%%%%%%%%%%%%%%%%%%%%%%%%%%%%%%%%%%%%%%%%%%%%%%%%%%%%%%%%%%%%%%%%%%%%%%%%%%%%%%%%%%%%%%%%%%%%%%%%%%%%%%%%%%%%%%%%%%%%%%%%%%%%%%%%%%%%%%%%%%%%%%%%%%%%%%%%%%%%%%%%%%%%%%%%%%%%%%%%%%%%

\begin{remark}
It is necessary for us to analyze the system of variational equations \eqref{variational eq}, which is nontrivial to derive, and we should notice that $\hat{\kappa}_1:=\mathbb{E}^{\bar{u}_2}[\kappa_1|\mathcal{F}_t^Y]$, for $\kappa_1=x^1,Z^1,p^1,k^1,P^1,K^1,\\v_2-\bar{u}_2$. Actually, for example, the part $\bar{b}_{\hat{\kappa}_2}^L\big(\mathbb{E}^{u_2^\theta}[\kappa_2^{u_2^\theta}|\mathcal{F}_t^Y]-\mathbb{E}^{\bar{u}_2}[\kappa_2^{\bar{u}_2}|\mathcal{F}_t^Y]\big)$, for $\kappa_2=x,Z,p,k,P,K$, will appear when we apply the convex variation, which adds difficulty for deduction due to the expectation $\mathbb{E}^{u_2}$ depending on the control variable $u_2$. However, we could overcome this difficulty under some assumptions, by converting it to the expectation $\mathbb{E}$ independent of $u_2$. For this target, we use Bayes' rule to get
\begin{equation}\label{expression}
\begin{aligned}
&\bar{b}_{\hat{\kappa}_2}^L\Big(\mathbb{E}^{u_2^\theta}\big[\kappa_2^{u_2^\theta}\big|\mathcal{F}_t^Y\big]
 -\mathbb{E}^{\bar{u}_2}\big[\kappa_2^{\bar{u}_2}\big|\mathcal{F}_t^Y\big]\Big)\\
&=\bar{b}_{\hat{\kappa}_2}^L\Bigg(\frac{\mathbb{E}\big[Z^{u_2^\theta}\kappa_2^{u_2^\theta}\big|\mathcal{F}_t^Y\big]}{\mathbb{E}\big[Z^{u_2^\theta}\big|\mathcal{F}_t^Y\big]}
 -\frac{\mathbb{E}\big[Z^{\bar{u}_2}\kappa_2^{\bar{u}_2}\big|\mathcal{F}_t^Y\big]}{\mathbb{E}\big[Z^{\bar{u}_2}\big|\mathcal{F}_t^Y\big]}\Bigg)\\
&=\bar{b}_{\hat{\kappa}_2}^L\Bigg(\frac{1}{\mathbb{E}\big[Z^{u_2^\theta}\big|\mathcal{F}_t^Y\big]}\mathbb{E}\big[Z^{u_2^\theta}\kappa_2^{u_2^\theta}
 -Z^{\bar{u}_2}\kappa_2^{\bar{u}_2}\big|\mathcal{F}_t^Y\big]
 +\frac{\mathbb{E}^{\bar{u}_2}\big[\kappa_2^{\bar{u}_2}\big|\mathcal{F}_t^Y\big]}{\mathbb{E}\big[Z^{u_2^\theta}\big|\mathcal{F}_t^Y\big]}
 \mathbb{E}\big[Z^{\bar{u}_2}-Z^{u_2^\theta}\big|\mathcal{F}_t^Y\big]\Bigg).
\end{aligned}
\end{equation}
In the expression \eqref{expression}, $\mathbb{E}\big[Z^{u_2^\theta}\kappa_2^{u_2^\theta}-Z^{\bar{u}_2}\kappa_2^{\bar{u}_2}\big|\mathcal{F}_t^Y\big]$ and $\mathbb{E}\big[Z^{\bar{u}_2}-Z^{u_2^\theta}\big|\mathcal{F}_t^Y\big]$ are just what we want to solve. However, we can not deal with the $\mathbb{E}[Z^{u_2^\theta}|\mathcal{F}_t^Y]$ part. Therefore, we reconsider the assumption in Theorem 3.2 that ``for all $(t,u_2)\in[0,T]\times U_2$, $Z^{u_2}(t)$ is $\mathcal{F}_t^Y$-adapted" (see Theorem 2 of \cite{HLW10}). Then for any $u_2(\cdot)\in\mathcal{U}_2$, Bayes' formula results in
\begin{equation}\label{expression2}
\mathbb{E}^{u_2}\big[\kappa_2^{u_2}(t)\big|\mathcal{F}_t^Y\big]=\mathbb{E}\big[\kappa_2^{u_2}(t)\big|\mathcal{F}_t^Y\big],\quad
\mathbb{E}^{u_2}\big[\kappa_1(t)\big|\mathcal{F}_t^Y\big]=\mathbb{E}\big[\kappa_1(t)\big|\mathcal{F}_t^Y\big],
\end{equation}
and
\begin{equation}\label{expression3}
\bar{b}_{\hat{\kappa}_2}^L\Big(\mathbb{E}^{u_2^\theta}\big[\kappa_2^{u_2^\theta}\big|\mathcal{F}_t^Y\big]
-\mathbb{E}^{\bar{u}_2}\big[\kappa_2^{\bar{u}_2}\big|\mathcal{F}_t^Y\big]\Big)
=\bar{b}_{\hat{\kappa}_2}^L\Big(\mathbb{E}\big[\kappa_2^{u_2^\theta}-\kappa_2^{\bar{u}_2}\big|\mathcal{F}_t^Y\big]\Big),
\end{equation}
for $\kappa_1=x^1,Z^1,p^1,k^1,P^1,K^1,v_2-\bar{u}_2$ and $\kappa_2=x,Z,p,k,P,K$.

Then by means of \eqref{expression2} and \eqref{expression3}, the system of variational equations \eqref{variational eq} can be derived. Similarly, the variational inequality \eqref{variational ineq} and equation \eqref{Gamma} in the following can also be obtained.
\end{remark}

%%%%%%%%%%%%%%%%%%%%%%%%%%%%%%%%%%%%%%%%%%%%%%%%%%%%%%%%%%%%%%%%%%%%%%%%%%%%%%%%%%%%%%%%%%%%%%%%%%%%%%%%%%%%%%%%%%%%%%%%%%%%%%%%%%%%%%%%%%%%%%%%%%%%%%%%%%%%%%%%%%%%%%%%%%%%%%%%%%%%%

For any $t\in[0,T]$, we set $\tilde{\lambda}^\theta(t)=\frac{\lambda^\theta(t)-\lambda^{\bar{u}_2}(t)}{\theta}-\lambda^1(t)$, for $\lambda=x,Z,p,k,P,K$. By some classical technique (see Wu \cite{Wu10}, Wang et al. \cite{WWX13}), we have the following lemma.
\begin{lemma}
\begin{equation}\label{estimates}
\begin{aligned}
&\lim_{\theta\rightarrow0}\sup_{0\leq t\leq T}\mathbb{E}|\tilde{x}^\theta(t)|^2=0,\quad \lim_{\theta\rightarrow0}\sup_{0\leq t\leq T}\mathbb{E}|\tilde{Z}^\theta(t)|^2=0,\\
&\lim_{\theta\rightarrow0}\sup_{0\leq t\leq T}\mathbb{E}|\tilde{p}^\theta(t)|^2=0,\quad \lim_{\theta\rightarrow0}\sup_{0\leq t\leq T}\mathbb{E}|\tilde{P}^\theta(t)|^2=0,\\
&\lim_{\theta\rightarrow0}\mathbb{E}\int_0^T|\tilde{k}^\theta(t)|^2dt=0,\quad \lim_{\theta\rightarrow0}\mathbb{E}\int_0^T|\tilde{K}^\theta(t)|^2dt=0.
\end{aligned}
\end{equation}
\end{lemma}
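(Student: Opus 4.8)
The plan is to prove the six convergence statements in \eqref{estimates} simultaneously by the standard perturbation-estimate method for forward-backward systems, adapted to the conditional mean-field structure. First I would write down the exact SDE/BSDE satisfied by each $\tilde\lambda^\theta$, $\lambda=x,Z,p,k,P,K$. By construction $\tilde\lambda^\theta(t)=\theta^{-1}(\lambda^\theta(t)-\lambda^{\bar u_2}(t))-\lambda^1(t)$, and subtracting the state equation \eqref{leader state1} written for $u_2^\theta$ and for $\bar u_2$ from the variational equation \eqref{variational eq}, then using the first-order Taylor expansions of $b^L,\sigma^L,h^L,f_1^L,f_2^L$ with integral-remainder form, one obtains for $(\tilde x^\theta,\tilde Z^\theta)$ a forward linear SDE system and for $(\tilde p^\theta,\tilde k^\theta,\tilde P^\theta,\tilde K^\theta)$ a backward linear BSDE system, both with coefficients that are the frozen derivatives $\bar\Lambda_x^L$ etc.\ (uniformly bounded by \textbf{(A1)}, \textbf{(A2)}, Lemma \ref{lemma3.1}), plus ``error'' driving terms of the form $\varepsilon^\theta_\Lambda(t):=\int_0^1[\Lambda^L_x(t,x^{\bar u_2}+\rho\theta(\theta^{-1}(x^\theta-x^{\bar u_2})),\dots)-\bar\Lambda^L_x(t)]\,d\rho\cdot(\cdots)$ together with the conditional-expectation perturbations handled via \eqref{expression2}--\eqref{expression3} of the Remark, so that all $\hat{\tilde\lambda}^\theta$ terms are genuine $\mathbb{E}[\,\cdot\,|\mathcal{F}^Y_t]$ and contract the same way as the non-conditional parts.

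Next I would estimate the forward part. Applying It\^o's formula to $|\tilde x^\theta(t)|^2$ and to $|\tilde Z^\theta(t)|^2$, using boundedness of the coefficients, the elementary inequality $|\mathbb{E}[\,\zeta\,|\mathcal{F}^Y_t]|^2\le\mathbb{E}[|\zeta|^2|\mathcal{F}^Y_t]$ (so the conditional-mean terms cost nothing after taking $\mathbb{E}$), Young's inequality and Gr\"onwall's lemma, I get
\[
\sup_{0\le t\le T}\mathbb{E}|\tilde x^\theta(t)|^2+\sup_{0\le t\le T}\mathbb{E}|\tilde Z^\theta(t)|^2\le C\,\mathbb{E}\!\int_0^T\!\big(|\varepsilon^\theta_b|^2+|\varepsilon^\theta_\sigma|^2+|\varepsilon^\theta_h|^2\big)dt.
\]
Then I would estimate the backward part: applying It\^o to $|\tilde p^\theta(t)|^2$ and $|\tilde P^\theta(t)|^2$ on $[t,T]$, using the terminal conditions $\tilde p^\theta(T)=\theta^{-1}[G_{1x}(x^\theta(T))-G_{1x}(x^{\bar u_2}(T))]-G_{1xx}(x^{\bar u_2}(T))x^1(T)$ (which is $o(1)$ in $L^2$ because $\mathbb{E}|\tilde x^\theta(T)|^2\to0$ and $\theta^{-1}(x^\theta-x^{\bar u_2})\to x^1$ in $L^2$, plus continuity of $G_{1xx}$), BSDE a priori estimates give
\[
\sup_{0\le t\le T}\mathbb{E}|\tilde p^\theta(t)|^2+\mathbb{E}\!\int_0^T\!|\tilde k^\theta|^2dt+\sup_{0\le t\le T}\mathbb{E}|\tilde P^\theta(t)|^2+\mathbb{E}\!\int_0^T\!|\tilde K^\theta|^2dt\le C\,\mathbb{E}\big[|\tilde x^\theta(T)|^2\big]+C\,\mathbb{E}\!\int_0^T\!\big(|\varepsilon^\theta_{f_1}|^2+|\varepsilon^\theta_{f_2}|^2\big)dt+C\sup_{0\le t\le T}\mathbb{E}|\tilde x^\theta(t)|^2.
\]
Finally I would show every error term tends to $0$ as $\theta\to0$: by \textbf{(A1)}--\textbf{(A2)} the derivatives are continuous and bounded, $\theta^{-1}(\lambda^\theta-\lambda^{\bar u_2})$ stays bounded in $L^2$ (a preliminary crude estimate on $\lambda^\theta-\lambda^{\bar u_2}$, which is $O(\theta)$ in $L^2$, gives this), and $x^\theta\to x^{\bar u_2}$ etc.\ in $L^2$; hence by dominated convergence each $\mathbb{E}\int_0^T|\varepsilon^\theta_\Lambda|^2dt\to0$. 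Combining the two displays closes the loop (forward feeds the backward, nothing feeds back) and yields \eqref{estimates}.

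The main obstacle I expect is \emph{not} the Gr\"onwall bookkeeping but handling the conditional-expectation perturbation terms $\bar b^L_{\hat\kappa_2}\big(\mathbb{E}^{u_2^\theta}[\kappa_2^{u_2^\theta}|\mathcal{F}^Y_t]-\mathbb{E}^{\bar u_2}[\kappa_2^{\bar u_2}|\mathcal{F}^Y_t]\big)$ cleanly, i.e.\ justifying rigorously that under the standing assumption of Theorem \ref{thm3.2} (``$Z^{u_2}(t)$ is $\mathcal{F}^Y_t$-adapted for all admissible $u_2$'') the decomposition \eqref{expression}--\eqref{expression3} really reduces these to $\mathbb{E}[\kappa_2^{u_2^\theta}-\kappa_2^{\bar u_2}|\mathcal{F}^Y_t]$, which is $\theta\,\mathbb{E}[\kappa_2^1|\mathcal{F}^Y_t]+\theta\,\mathbb{E}[\tilde\kappa_2^\theta|\mathcal{F}^Y_t]$, so after dividing by $\theta$ it produces exactly the $\hat\kappa_2^1+\hat{\tilde\kappa}_2^\theta$ structure already present in \eqref{variational eq} and a remainder controlled by $\sup_t\mathbb{E}|\tilde\kappa_2^\theta|^2$. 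Since this creates an implicit coupling (the $\tilde p^\theta$-equation's driver contains $\hat{\tilde p}^\theta,\hat{\tilde k}^\theta$, etc.), one must absorb these into the left-hand side using $\mathbb{E}|\hat{\tilde\lambda}^\theta|^2\le\mathbb{E}|\tilde\lambda^\theta|^2$ and choose the Young-inequality constants small enough before invoking Gr\"onwall; this is the delicate point and the reason the authors merely cite ``some classical technique (see Wu \cite{Wu10}, Wang et al.\ \cite{WWX13})''.
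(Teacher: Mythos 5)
The paper itself offers no proof of this lemma; it simply invokes ``some classical technique'' and cites Wu \cite{Wu10} and Wang et al.\ \cite{WWX13}, so your sketch is being measured against the standard argument those references supply. Your overall template --- write the linear equations for $\tilde\lambda^\theta$ with integral-remainder Taylor errors, reduce the $\mathbb{E}^{u_2^\theta}[\,\cdot\,|\mathcal{F}^Y_t]$ perturbations to genuine conditional expectations via \eqref{expression2}--\eqref{expression3}, then It\^o plus Gr\"onwall for the forward pair and BSDE a priori estimates for the backward pairs --- is exactly the intended route, and your identification of the conditional-expectation terms as the delicate point matches the paper's own Remark 3.1.

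There is, however, one genuine gap: your parenthetical claim that the loop closes because ``forward feeds the backward, nothing feeds back'' is false for this system. In \eqref{leader state1} the coefficients $b^L,\sigma^L,h^L$ depend on $(\hat p,\hat k,\hat K)$ through the follower's response $\bar u_1(t;\hat{x}^{\bar u_1,\hat u_2},\hat u_2,\hat p,\hat k,\hat K)$, and correspondingly the forward variational equation in \eqref{variational eq} contains the terms $\bar b^L_{\hat p}\hat p^1+\bar b^L_{\hat k}\hat k^1+\bar b^L_{\hat K}\hat K^1$ (and likewise for $\sigma^L$ and $h^L$). Hence your first display must carry $C\,\mathbb{E}\int_0^T\big(|\tilde p^\theta|^2+|\tilde k^\theta|^2+|\tilde K^\theta|^2\big)dt$ on its right-hand side, and substituting your two displays into each other produces an inequality of the form $\sup_t\mathbb{E}|\tilde x^\theta(t)|^2\le C_1C_2T\,\sup_t\mathbb{E}|\tilde x^\theta(t)|^2+o(1)$, which Gr\"onwall and ``small Young constants'' cannot rescue: the cross-coupling constants are fixed by the coefficient bounds, not by your choice of the Young parameter. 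Closing the estimate requires an additional ingredient --- a smallness condition on $T$ or on the sensitivity of $\bar u_1$ to $(\hat p,\hat k,\hat K)$, a monotonicity/domination structure in the spirit of Li and Yu \cite{LY18}, or a continuation argument --- none of which you (nor, to be fair, the paper) supply. The rest of the sketch (terminal-condition convergence for $\tilde p^\theta(T)$, dominated convergence for the remainders) is fine modulo this, though note that $\tilde p^\theta(T)=o(1)$ in $L^2$ itself presupposes the forward estimate, so it cannot be invoked before the coupled system is resolved.
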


Then, we derive the variational inequality. Since $\bar{u}_2(\cdot)$ is an optimal control, we have
\begin{equation}\label{J2^L}
\frac{1}{\theta}\Big[J_2^L(u_2^\theta(\cdot))-J_2^L(\bar{u}_2(\cdot))\Big]\geq0.
\end{equation}
Thus,
\begin{equation*}
\begin{aligned}
&J_2^L(u_2^\theta(\cdot))-J_2^L(\bar{u}_2(\cdot))
=\mathbb{E}\bigg[\int_0^T\big[Z^\theta(t)l_2^L(t,x^\theta,u_2^\theta)-Z^{\bar{u}_2}(t)l_2^L(t,x^{\bar{u}_2},\bar{u}_2)\big]dt\\
&\qquad+Z^\theta(T)G_2(x^\theta(T))-Z^{\bar{u}_2}(T)G_2(x^{\bar{u}_2}(T))\bigg]\\
&=\mathbb{E}\bigg[\int_0^T\Big\{\big(Z^\theta(t)-Z^{\bar{u}_2}(t)\big)\bar{l}_2^L(t)+Z^{\bar{u}_2}(t)\big[\bar{l}_{2x}^L(t)(x^\theta-x^{\bar{u}_2})
+\bar{l}_{2\hat{x}}^L(t)(\hat{x}^\theta-\hat{x}^{\hat{\bar{u}}_2})+\bar{l}_{2\hat{p}}^L(t)(\hat{p}^\theta-\hat{\bar{p}})\\
&\qquad+\bar{l}_{2\hat{k}}^L(t)(\hat{k}^\theta-\hat{\bar{k}})
+\bar{l}_{2\hat{K}}^L(t)(\hat{K}^\theta-\hat{\bar{K}})+\bar{l}_{2u_2}^L(t)(u_2^\theta-\bar{u}_2)+\bar{l}_{2\hat{u}_2}^L(t)(\hat{u}_2^\theta-\hat{\bar{u}}_2)\big]\Big\}dt\\
&\qquad+(Z^\theta(T)-Z^{\bar{u}_2}(T))G_2(x^{\bar{u}_2}(T))+Z^{\bar{u}_2}(T)G_{2x}(x^{\bar{u}_2}(T))(x^\theta(T)-x^{\bar{u}_2}(T))\bigg]\geq0.
\end{aligned}
\end{equation*}
From Lemma 3.2, when $\theta\rightarrow0$, it follows that
\begin{equation*}
\begin{aligned}
&\frac{1}{\theta}\Big[J_2^L(u_2^\theta(\cdot))-J_2^L(\bar{u}_2(\cdot))\Big]
\rightarrow\mathbb{E}\bigg[\int_0^T\Big\{Z^1(t)\bar{l}_2^L(t)+Z^{\bar{u}_2}(t)\big[\bar{l}_{2x}^L(t)x^1+\bar{l}_{2\hat{x}}^L(t)\hat{x}^1\\
&\quad+\bar{l}_{2\hat{p}}^L(t)\hat{p}^1+\bar{l}_{2\hat{k}}^L(t)\hat{k}^1+\bar{l}_{2\hat{K}}^L(t)\hat{K}^1
+\bar{l}_{2u_2}^L(t)(v_2-\bar{u}_2)+\bar{l}_{2\hat{u}_2}^L(t)(\hat{v}_2-\hat{\bar{u}}_2)\big]\Big\}dt\\
&\quad+Z^1(T)G_2(x^{\bar{u}_2}(T))+Z^{\bar{u}_2}(T)G_{2x}(x^{\bar{u}_2}(T))x^1(T)\bigg]\geq0,
\end{aligned}
\end{equation*}
i.e.
\begin{equation}\label{variational ineq}
\begin{aligned}
&\mathbb{E}^{\bar{u}_2}\bigg[\int_0^T\Big[\big(Z^{\bar{u}_2}(t)\big)^{-1}Z^1(t)\bar{l}_2^L(t)+\bar{l}_{2x}^L(t)x^1+\bar{l}_{2\hat{x}}^L(t)\hat{x}^1+\bar{l}_{2\hat{p}}^L(t)\hat{p}^1
+\bar{l}_{2\hat{k}}^L(t)\hat{k}^1\\
&\qquad+\bar{l}_{2\hat{K}}^L(t)\hat{K}^1+\bar{l}_{2u_2}^L(t)(v_2-\bar{u}_2(t))+\bar{l}_{2\hat{u}_2}^L(t)(\hat{v}_2-\hat{\bar{u}}_2(t))\Big]dt\\
&\qquad+\big(Z^{\bar{u}_2}(T)\big)^{-1}Z^1(T)G_2(x^{\bar{u}_2}(T))+G_{2x}(x^{\bar{u}_2}(T))x^1(T)\bigg]\geq0.
\end{aligned}
\end{equation}
Noticing that $(Z^{\bar{u}_2}(\cdot))^{-1}Z^1(\cdot)$ appears in \eqref{variational ineq}, then we set $\Gamma^1(\cdot):=(Z^{\bar{u}_2}(\cdot))^{-1}Z^1(\cdot)$.

Firstly, applying It\^{o}'s formula to $(Z^{\bar{u}_2}(\cdot))^{-1}$, we get
\begin{equation}
d(Z^{\bar{u}_2}(t))^{-1}=-(Z^{\bar{u}_2}(t))^{-1}\bar{h}^{L\top}(t)dY(t)+(Z^{\bar{u}_2}(t))^{-1}\bar{h}^{L\top}(t)\bar{h}^{L}(t)dt.
\end{equation}
Secondly, applying It\^{o}'s formula to $(Z^{\bar{u}_2}(\cdot))^{-1}Z^1(\cdot)$, we obtain
\begin{equation}\label{Gamma}
\left\{
\begin{aligned}
d\Gamma^1(t)&=\big[\bar{h}_{x}^{L\top}(t)x^1(t)+\bar{h}_{\hat{x}}^{L\top}(t)\hat{x}^1(t)+\bar{h}_{\hat{p}}^{L\top}(t)\hat{p}^1(t)
+\bar{h}_{\hat{k}}^{L\top}(t)\hat{k}^1(t)+\bar{h}_{\hat{K}}^{L\top}(t)\hat{K}^1(t)\\
&\quad+\bar{h}_{u_2}^{L\top}(t)(v_2-\bar{u}_2(t))+\bar{h}_{\hat{u}_2}^{L\top}(t)(\hat{v}_2-\hat{\bar{u}}_2(t))\big]dW^{\bar{u}_2}(t),\quad t\in[0,T],\\
\Gamma^1(0)&=0,
\end{aligned}
\right.
\end{equation}
where $W(\cdot)$ and $W^{\bar{u}_2}(\cdot)$ are two independent standard Brownian motions under the probability $\mathbb{P}^{\bar{u}_2}$, $d\mathbb{P}^{\bar{u}_2}:=Z^{\bar{u}_2}(T)d\mathbb{P}$.

Next, we introduce the following system of adjoint equations, consisting of two SDEs and two BSDEs, whose solution is the six-tuple $(q(\cdot),Q(\cdot),\varphi(\cdot),\delta(\cdot),\alpha(\cdot),\beta(\cdot))$:
\begin{equation}\label{adjoint eq}
\left\{
\begin{aligned}
dq(t)&=\Big\{\bar{f}_{1p}^Lq+\mathbb{E}^{\bar{u}_2}\big[\bar{l}_{2\hat{p}}^L+\bar{h}_{\hat{p}}^{L\top}\beta+\bar{b}_{\hat{p}}^L\varphi
+\bar{\sigma}_{\hat{p}}^L\delta+\bar{f}_{2\hat{p}}^LQ+\bar{f}_{1\hat{p}}^Lq\big|\mathcal{F}_t^Y\big]\Big\}dt\\
&\quad+\Big\{\bar{f}_{1k}^Lq+\mathbb{E}^{\bar{u}_2}\big[\bar{l}_{2\hat{k}}^L+\bar{h}_{\hat{k}}^{L\top}\beta+\bar{b}_{\hat{k}}^L\varphi
+\bar{\sigma}_{\hat{k}}^L\delta+\bar{f}_{2\hat{k}}^LQ+\bar{f}_{1\hat{k}}^Lq\big|\mathcal{F}_t^Y\big]\Big\}dW(t),\\
dQ(t)&=\Big\{\bar{f}_{1K}^Lq+\mathbb{E}^{\bar{u}_2}\big[\bar{l}_{2\hat{K}}^L+\bar{h}_{\hat{K}}^{L\top}\beta+\bar{b}_{\hat{K}}^L\varphi
+\bar{\sigma}_{\hat{K}}^L\delta+\bar{f}_{2\hat{K}}^LQ+\bar{f}_{1\hat{K}}^Lq\big|\mathcal{F}_t^Y\big]\Big\}dW^{\bar{u}_2}(t),\\
-d\varphi(t)&=\Big\{\bar{l}_{2x}^L+\bar{h}_x^{L\top}\beta+\bar{b}_x^L\varphi+\bar{\sigma}_x^L\delta+\bar{f}_{2x}^LQ+\bar{f}_{1x}^Lq
+\mathbb{E}^{\bar{u}_2}\big[\bar{l}_{2\hat{x}}^L+\bar{h}_{\hat{x}}^{L\top}\beta+\bar{b}_{\hat{x}}^L\varphi\\
&\quad+\bar{\sigma}_{\hat{x}}^L\delta+\bar{f}_{2\hat{x}}^LQ+\bar{f}_{1\hat{x}}^Lq\big|\mathcal{F}_t^Y\big]\Big\}dt-\delta(t)dW(t),\\
-d\alpha(t)&=\bar{l}_2^L(t)dt-\beta(t)dW^{\bar{u}_2}(t),\quad t\in[0,T],\\
q(0)&=0,\ Q(0)=0,\ \alpha(T)=G_2(x^{\bar{u}_2}(T)),\\
\varphi(T)&=G_{2x}(x^{\bar{u}_2}(T))+G_{1x}(x^{\bar{u}_2}(T))Q(T)+G_{1xx}(x^{\bar{u}_2}(T))q(T).
\end{aligned}
\right.
\end{equation}
Then by the equations \eqref{variational eq}, \eqref{Gamma} and \eqref{adjoint eq}, applying It\^{o}'s formula to $\langle x^1(\cdot),\varphi(\cdot)\rangle-\langle p^1(\cdot),q(\cdot)\rangle\\-\langle P^1(\cdot),Q(\cdot)\rangle+\langle\Gamma^1(\cdot),\alpha(\cdot)\rangle$ on $[0,T]$ and inserting it into \eqref{variational ineq}, we derive
\begin{equation}\label{variational ineq--}
\begin{aligned}
&\mathbb{E}^{\bar{u}_2}\bigg[\int_0^T\Big\{\big\langle \bar{l}_{2u_2}^L+\bar{h}_{u_2}^{L\top}\beta+\bar{b}_{u_2}^L\varphi+\bar{\sigma}_{u_2}^L\delta+\bar{f}_{2u_2}^LQ+\bar{f}_{1u_2}^Lq,v_2-\bar{u}_2\big\rangle\\
&\qquad\quad+\big\langle\mathbb{E}^{\bar{u}_2}\big[\bar{l}_{2\hat{u}_2}^L+\bar{h}_{\hat{u}_2}^{L\top}\beta+\bar{b}_{\hat{u}_2}^L\varphi
+\bar{\sigma}_{\hat{u}_2}^L\delta+\bar{f}_{2\hat{u}_2}^LQ+\bar{f}_{1\hat{u}_2}^Lq\big|\mathcal{F}_t^Y\big],(\hat{v}_2-\hat{\bar{u}}_2)\big\rangle\Big\}dt\bigg]\geq0.
\end{aligned}
\end{equation}
Define the Hamiltonian function $H_2:[0,T]\times\mathbb{R}^n\times U_2\times\mathbb{R}^n\times\mathbb{R}^{n\times d_1}\times\mathbb{R}^{1\times d_2}\times\mathbb{R}^n\times\mathbb{R}\times\mathbb{R}^n\times\mathbb{R}^{n\times d_1}\times\mathbb{R}^{n\times d_2}\rightarrow\mathbb{R}$ as
\begin{equation}\label{H2}
\begin{aligned}
&H_2(t,x^{u_2},u_2,p,k,K;q,Q,\varphi,\delta,\beta):=l_2^L(t,x^{u_2},u_2)+\big\langle\beta(t),h^L(t,x^{u_2},u_2)\big\rangle
+\big\langle\varphi(t),b^L(t,x^{u_2},u_2)\big\rangle\\
&+\big\langle\delta(t),\sigma^L(t,x^{u_2},u_2)\big\rangle+\big\langle Q(t),f_2^L(t,x^{u_2},u_2,K)\big\rangle
+\big\langle q(t),f_1^L(t,x^{u_2},u_2,p,k,K)\big\rangle.
\end{aligned}
\end{equation}
Then the equations \eqref{adjoint eq} is equivalent to:
\begin{equation}\label{adjoint eq2}
\left\{
\begin{aligned}
dq(t)&=\Big\{\bar{H}_{2p}(t)+\mathbb{E}^{\bar{u}_2}\big[\bar{H}_{2\hat{p}}(t)\big|\mathcal{F}_t^Y\big]\Big\}dt
+\Big\{\bar{H}_{2k}(t)+\mathbb{E}^{\bar{u}_2}\big[\bar{H}_{2\hat{k}}(t)\big|\mathcal{F}_t^Y\big]\Big\}dW(t),\\
dQ(t)&=\Big\{\bar{f}_{1K}^L(t)q(t)+\mathbb{E}^{\bar{u}_2}\big[\bar{H}_{2\hat{K}}(t)\big|\mathcal{F}_t^Y\big]\Big\}dW^{\bar{u}_2}(t),\\
-d\varphi(t)&=\Big\{\bar{H}_{2x}(t)+\mathbb{E}^{\bar{u}_2}\big[\bar{H}_{2\hat{x}}(t)\big|\mathcal{F}_t^Y\big]\Big\}dt-\delta(t)dW(t),\\
-d\alpha(t)&=\bar{l}_2^L(t)dt-\beta(t)dW^{\bar{u}_2}(t),\quad t\in[0,T],\\
q(0)&=0,\ Q(0)=0,\ \alpha(T)=G_2(x^{\bar{u}_2}(T)),\\
\varphi(T)&=G_{2x}(x^{\bar{u}_2}(T))+G_{1x}(x^{\bar{u}_2}(T))Q(T)+G_{1xx}(x^{\bar{u}_2}(T))q(T).
\end{aligned}
\right.
\end{equation}
where we set $\bar{H}_{2\lambda}(t)\equiv H_{2\lambda}(t,x^{\bar{u}_2},\bar{u}_2,\bar{p},\bar{k},\bar{K};q,Q,\varphi,\delta,\beta)$ for $\lambda=x,\hat{x},p,\hat{p},k,\hat{k},\hat{K}$.

From (\ref{variational ineq--}) and (\ref{H2}), it is easy to obtain the following maximum principle of the leader.
\begin{theorem}
Let {\bf(A1)} and {\bf(A2)} hold, and $\bar{u}_2(\cdot)\in\mathcal{U}_2$ be an optimal control of {\bf Problem of the leader} and $(x^{\bar{u}_2}(\cdot),Z^{\bar{u}_2}(\cdot),\bar{p}(\cdot),\bar{k}(\cdot),\bar{P}(\cdot),\bar{K}(\cdot))$ be the corresponding optimal state. Let $(q(\cdot),Q(\cdot),\varphi(\cdot),\delta(\cdot),\alpha(\cdot),\beta(\cdot))$ be the adjoint six-tuple satisfying \eqref{adjoint eq2}, then we have
\begin{equation}\label{maximum condition-leader}
\begin{aligned}
&\big\langle H_{2u_2}(t,x^{\bar{u}_2},\bar{u}_2,\bar{p},\bar{k},\bar{K};q,Q,\varphi,\delta,\beta),v_2-\bar{u}_2(t)\big\rangle\\
&+\big\langle \mathbb{E}^{\bar{u}_2}\big[H_{2\hat{u}_2}(t,x^{\bar{u}_2},\bar{u}_2,\bar{p},\bar{k},\bar{K};q,Q,\varphi,\delta,\beta)\big|\mathcal{F}_t^Y\big],
\hat{v}_2-\hat{\bar{u}}_2(t)\big\rangle\geq0,
\end{aligned}
\end{equation}
a.e. $t\in[0,T]$, $\mathbb{P}^{\bar{u}_2}$-a.s. holds for any $v_2\in U_2$.
\end{theorem}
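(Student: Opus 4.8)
The plan is to run the classical convex-variation argument, in which nearly all the pieces have already been assembled above; what remains is to glue them together and then localize. First I would take the perturbed control $u_2^{\theta}(\cdot)=\bar u_2(\cdot)+\theta\big(v_2(\cdot)-\bar u_2(\cdot)\big)$ (admissible by convexity of $U_2$), expand $J_2^L$ in \eqref{cf22} to first order in $\theta$, divide by $\theta$, and let $\theta\to0$, using the convergence estimates of Lemma 3.2 together with the polynomial growth in \textbf{(A1)}--\textbf{(A2)} (which supply the uniform integrability needed to pass to the limit through the density $Z^{\theta}(\cdot)$). The outcome is the variational inequality \eqref{variational ineq}. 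Throughout this step, and already in writing down the variational system \eqref{variational eq} and \eqref{Gamma}, the key is to use the standing assumption that $Z^{u_2}(t)$ is $\mathcal F_t^Y$-adapted: by Bayes' rule this yields \eqref{expression2}--\eqref{expression3}, so that every conditional mean $\hat\kappa^1=\mathbb E^{\bar u_2}[\kappa^1\,|\,\mathcal F_t^Y]$ occurring there equals $\mathbb E[\kappa^1\,|\,\mathcal F_t^Y]$, which is what makes the conditional-mean-field variational equations tractable.

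The core of the proof is the duality computation. Set $\Gamma^1(\cdot)=\big(Z^{\bar u_2}(\cdot)\big)^{-1}Z^1(\cdot)$, whose dynamics \eqref{Gamma} come from It\^o's formula (recall that $W(\cdot)$ and $W^{\bar u_2}(\cdot)$ are independent Brownian motions under $\mathbb P^{\bar u_2}$), and take the adjoint six-tuple $(q,Q,\varphi,\delta,\alpha,\beta)$ solving \eqref{adjoint eq}, equivalently \eqref{adjoint eq2}, which we assume to be well posed. I would then apply It\^o's formula to
\[
\langle x^1(\cdot),\varphi(\cdot)\rangle-\langle p^1(\cdot),q(\cdot)\rangle-\langle P^1(\cdot),Q(\cdot)\rangle+\langle\Gamma^1(\cdot),\alpha(\cdot)\rangle
\]
on $[0,T]$ and take $\mathbb E^{\bar u_2}$. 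The stochastic integrals are genuine $\mathbb P^{\bar u_2}$-martingales thanks to Lemma 3.2 and $\mathbb E^{\bar u_2}[\,\cdot\,]=\mathbb E\big[Z^{\bar u_2}(T)\,\cdot\,\big]$. The initial data $x^1(0)=Z^1(0)=q(0)=Q(0)=\Gamma^1(0)=0$ and the terminal data $p^1(T)=G_{1xx}x^1(T)$, $P^1(T)=G_{1x}x^1(T)$, $\varphi(T)=G_{2x}+G_{1x}Q(T)+G_{1xx}q(T)$, $\alpha(T)=G_2$ are chosen precisely so that the boundary contributions collapse to the terminal terms of \eqref{variational ineq}; and every drift pairing of a variational coefficient against an adjoint coefficient cancels, once each conditional-mean-field entry is moved inside through the tower property $\mathbb E^{\bar u_2}[\langle A,\hat\kappa^1\rangle]=\mathbb E^{\bar u_2}\big[\langle\mathbb E^{\bar u_2}[A\,|\,\mathcal F_t^Y],\kappa^1\rangle\big]$ --- which is exactly why the adjoint equations \eqref{adjoint eq} carry the $\mathbb E^{\bar u_2}[\,\cdot\,|\,\mathcal F_t^Y]$ terms. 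Inserting what survives into \eqref{variational ineq} and regrouping through the Hamiltonian \eqref{H2} produces the reduced variational inequality \eqref{variational ineq--}.

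Finally, from \eqref{variational ineq--} I would obtain the pointwise condition \eqref{maximum condition-leader} by the usual localization: take $v_2(\cdot)$ equal to $\bar u_2(\cdot)$ outside an arbitrarily short time interval (legitimate by convexity of $U_2$), divide by the interval length, and invoke Lebesgue's differentiation theorem, keeping the mean-field contribution in conditional-expectation form since $\mathbb E^{\bar u_2}[H_{2\hat u_2}\,|\,\mathcal F_t^Y]$ is $\mathcal F_t^Y$-measurable. The hard part will be the second step: organizing the It\^o expansion of a four-term bilinear functional whose integrand has a large number of drift contributions, converting each conditional-mean-field term by the tower property so that everything matching the adjoint system cancels, and at the same time controlling all integrability under the change of measure given that only polynomial moments of the controls are available. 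This is also the step that dictates the somewhat intricate structure of the adjoint system \eqref{adjoint eq}--\eqref{adjoint eq2}.
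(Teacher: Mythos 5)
Your proposal follows essentially the same route as the paper: convex perturbation and Lemma 3.2 to obtain the variational inequality \eqref{variational ineq}, the substitution $\Gamma^1=(Z^{\bar u_2})^{-1}Z^1$ with dynamics \eqref{Gamma}, It\^o's formula applied to $\langle x^1,\varphi\rangle-\langle p^1,q\rangle-\langle P^1,Q\rangle+\langle\Gamma^1,\alpha\rangle$ against the adjoint system \eqref{adjoint eq} to reach \eqref{variational ineq--}, and a standard localization to pass to the pointwise condition \eqref{maximum condition-leader}. You also correctly identify the role of the $\mathcal F_t^Y$-adaptedness of $Z^{u_2}$ and Bayes' rule (the paper's Remark 3.1) in making the conditional mean-field terms tractable, so the argument is sound and matches the paper's.
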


In the following, we wish to establish the verification theorem for {\bf Problem of the leader}. We aim to prove that, under some conditions, for any $v_2(\cdot)\in\mathcal{U}_2$, $J_2^L(v_2(\cdot))-J_2^L(\bar{u}_2(\cdot))\geq0$ holds. However, we find that, during the duality procedure, when applying It\^{o}'s formula, taking integral and expectation, we cannot guarantee that
$$\mathbb{E}^{u_2}\bigg[\int_0^T\Big\{\cdots\Big\}dW^{\bar{u}_2}(t)\bigg]=0$$
holds for any $u_2(\cdot)\in\mathcal{U}_2$ where $\bar{u}_2(\cdot)$ is a candidate optimal control. The reason is that it is not sure that $W^{\bar{u}_2}(\cdot)$ is a Brownian motion under the expectation $\mathbb{E}^{u_2}$. This is the main challenging difficulty which is not easy to solve for us up to now. Therefore, in the following of this paper we consider $h(t,x^{u_1,u_2},u_1,u_2)\equiv h(t)$. In this special case, $Y(\cdot)$ and $W^{u_1,u_2}(\cdot)$ are not controlled by $(u_1(\cdot),u_2(\cdot))$ any more. Thus we could write $W^{u_1,u_2}(\cdot)\equiv\bar{W}(\cdot)$ to be a Brownian motion under the probability $\mathbb{P}^{u_1,u_2}:=\mathbb{\bar{P}}$ directly. Moreover, the adjoint process $(P(\cdot),K(\cdot))$ is needless in the follower's problem, therefore it causes the disappearance of the adjoint processes $(\alpha(\cdot),\beta(\cdot),Q(\cdot))$ in (\ref{adjoint eq2}) of {\bf Problem of the leader}.

In this case, \eqref{maximum condition-leader} in Theorem 3.3 becomes
\begin{equation}
\begin{aligned}
&\big\langle H_{2u_2}(t,x^{\bar{u}_2},\bar{u}_2,\bar{p},\bar{k};q,\varphi,\delta),v_2-\bar{u}_2(t)\big\rangle\\
&+\big\langle\bar{\mathbb{E}}\big[H_{2\hat{u}_2}(t,x^{\bar{u}_2},\bar{u}_2,\bar{p},\bar{k};q,\varphi,\delta)\big|\mathcal{F}_t^Y\big],
\hat{v}_2-\hat{\bar{u}}_2(t)\big\rangle\geq0,
\end{aligned}
\end{equation}
holds for $a.e.\ t\in[0,T]$,\ $\bar{\mathbb{P}}$-$a.s.$, and for any $v_2\in U_2$. Here, the expectation $\bar{\mathbb{E}}$ corresponds to the uncontrolled probability measure $\mathbb{P}^{u_1,u_2}:=\bar{\mathbb{P}}$. The Hamiltonian function \eqref{H2} becomes
\begin{equation}
\begin{aligned}
&H_2(t,x^{u_2},u_2,p,k;q,\varphi,\delta):=\big\langle\varphi(t),b^L(t,x^{u_2},u_2)\big\rangle+\big\langle\delta(t),\sigma^L(t,x^{u_2},u_2)\big\rangle\\
&\quad+\big\langle q(t),f_1^L(t,x^{u_2},u_2,p,k)\big\rangle+l_2^L(t,x^{u_2},u_2),
\end{aligned}
\end{equation}
and the adjoint FBSDE \eqref{adjoint eq2} for $(q(\cdot),\varphi(\cdot),\delta(\cdot))$ reduces to
\begin{equation}\label{adjoint eq22}
\left\{
\begin{aligned}
dq(t)&=\big\{\bar{H}_{2p}(t)+\bar{\mathbb{E}}\big[\bar{H}_{2\hat{p}}(t)\big|\mathcal{F}_t^Y\big]\big\}dt
+\big\{\bar{H}_{2k}(t)+\bar{\mathbb{E}}\big[\bar{H}_{2\hat{k}}(t)\big|\mathcal{F}_t^Y\big]\big\}dW(t),\\
-d\varphi(t)&=\big\{\bar{H}_{2x}(t)+\bar{\mathbb{E}}\big[\bar{H}_{2\hat{x}}(t)\big|\mathcal{F}_t^Y\big]\big\}dt-\delta(t)dW(t),\quad t\in[0,T],\\
q(0)&=0,\ \varphi(T)=G_{2x}(x^{\bar{u}_2}(T))+G_{1xx}(x^{\bar{u}_2}(T))q(T).
\end{aligned}
\right.
\end{equation}

We have the following result. The detailed proof is inspired by Yong and Zhou \cite{YZ99}, by Clarke's generalized gradient. We omit it and let it to the interested readers.
\begin{theorem}
Suppose that {\bf(A1)} and {\bf(A2)} hold. Let $\bar{u}_2(\cdot)\in\mathcal{U}_2$, $(x^{\bar{u}_2}(\cdot),\bar{p}(\cdot),\bar{k}(\cdot))$ be the corresponding state processes and $G_{1x}=Mx$, that is, $G_{1xx}(x)\equiv M\in\mathbb{R}^n$. Let the adjoint equation \eqref{adjoint eq22} admits a solution triple $(q(\cdot),\varphi(\cdot),\delta(\cdot))$ and suppose that $(x^{u_2},u_2,p,k)\rightarrow H_2(t,x^{u_2},u_2,p,k;q,\varphi,\delta)$ and $x\rightarrow G_2(x)$ are convex. Suppose
\begin{equation}\label{minimal condition}
\begin{aligned}
&H_2(t,x^{\bar{u}_2},\bar{u}_2,\bar{p},\bar{k};q,\varphi,\delta)
+\bar{\mathbb{E}}\big[H_2(t,x^{\bar{u}_2},\bar{u}_2,\bar{p},\bar{k};q,\varphi,\delta)\big|\mathcal{F}_t^Y\big]\\
&=\min_{u_2\in\mathcal{U}_2}\Big\{H_2(t,x^{u_2},u_2,p,k;q,\varphi,\delta)+\bar{\mathbb{E}}\big[H_2(t,x^{u_2},u_2,p,k;q,\varphi,\delta)\big|\mathcal{F}_t^Y\big]\Big\}
\end{aligned}
\end{equation}
holds for a.e. $t\in[0,T]$, $\mathbb{P}$-a.s. Then $\bar{u}_2(\cdot)$ is an optimal control of {\bf Problem of the leader}
\end{theorem}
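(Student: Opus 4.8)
The plan is to mimic the convexity-based verification argument of Theorem 3.2, but now carried out entirely under the fixed probability $\bar{\mathbb{P}}$ (this is exactly why the special case $h\equiv h(t)$ is imposed: $W^{\bar u_2}(\cdot)=\bar W(\cdot)$ is a $\bar{\mathbb{P}}$-Brownian motion regardless of $u_2$, so the stochastic integrals against $dW$ and $d\bar W$ that appear in the duality relations are genuine martingales under $\bar{\mathbb{E}}$). First I would write, for an arbitrary $v_2(\cdot)\in\mathcal{U}_2$ with corresponding state $(x^{v_2}(\cdot),p^{v_2}(\cdot),k^{v_2}(\cdot))$,
\begin{equation*}
J_2^L(v_2(\cdot))-J_2^L(\bar u_2(\cdot))=\bar{\mathbb{E}}\bigg[\int_0^T\big(l_2^L(t,x^{v_2},v_2)-l_2^L(t,x^{\bar u_2},\bar u_2)\big)dt+G_2(x^{v_2}(T))-G_2(x^{\bar u_2}(T))\bigg],
\end{equation*}
and bound the terminal term below using convexity of $G_2$ and the relation $G_{1x}=Mx$ (so $G_{1xx}\equiv M$), which makes the terminal conditions of the variational/adjoint system linear. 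Then I would apply It\^o's formula, under $\bar{\mathbb{P}}$, to $\langle x^{v_2}(\cdot)-x^{\bar u_2}(\cdot),\varphi(\cdot)\rangle-\langle p^{v_2}(\cdot)-p^{\bar u_2}(\cdot),q(\cdot)\rangle$ on $[0,T]$, using the forward dynamics of $x^{v_2}-x^{\bar u_2}$, $p^{v_2}-p^{\bar u_2}$ (differences of \eqref{leader state1}) and the adjoint dynamics \eqref{adjoint eq22}. Collecting terms and recognizing the combination that defines $H_2$ and its $x,p,k$-derivatives, this expresses $J_2^L(v_2)-J_2^L(\bar u_2)$ as the expectation of a time integral of the increment of $H_2(t,\cdot,\cdot,\cdot,\cdot;q,\varphi,\delta)+\bar{\mathbb{E}}[H_2(t,\cdot,\cdot,\cdot,\cdot;q,\varphi,\delta)\,|\,\mathcal{F}_t^Y]$ between the two controls, plus a nonnegative remainder coming from the convexity of $G_2$.

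Next, the convexity of $(x^{u_2},u_2,p,k)\mapsto H_2(t,x^{u_2},u_2,p,k;q,\varphi,\delta)$ is used to dominate that increment from below by the first-order term
\begin{equation*}
\big\langle \bar H_{2x}(t),x^{v_2}-x^{\bar u_2}\big\rangle+\big\langle \bar H_{2u_2}(t),v_2-\bar u_2\big\rangle+\big\langle \bar H_{2p}(t),p^{v_2}-p^{\bar u_2}\big\rangle+\big\langle \bar H_{2k}(t),k^{v_2}-k^{\bar u_2}\big\rangle,
\end{equation*}
and similarly for the $\bar{\mathbb{E}}[\cdot|\mathcal{F}_t^Y]$ copy with the $\hat{}$-derivatives; the $x$-, $p$-, $k$-, $\hat x$-, $\hat p$-, $\hat k$-parts are precisely those that were cancelled by the It\^o duality above, so only the $u_2$- and $\hat u_2$-terms survive. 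What remains is $\bar{\mathbb{E}}\int_0^T\big(\langle \bar H_{2u_2},v_2-\bar u_2\rangle+\langle \bar H_{2\hat u_2},\hat v_2-\hat{\bar u}_2\rangle\big)dt$, and since $\hat v_2-\hat{\bar u_2}=\bar{\mathbb{E}}[v_2-\bar u_2\,|\,\mathcal{F}_t^Y]$, the second inner product can be conditioned onto $\mathcal{F}_t^Y$ and re-paired with $v_2-\bar u_2$, giving
\begin{equation*}
\bar{\mathbb{E}}\int_0^T\Big\langle \bar H_{2u_2}(t)+\bar{\mathbb{E}}\big[\bar H_{2\hat u_2}(t)\,\big|\,\mathcal{F}_t^Y\big],\,v_2(t)-\bar u_2(t)\Big\rangle dt.
\end{equation*}
Finally, the minimality hypothesis \eqref{minimal condition} says $\bar u_2(t)$ minimizes $u\mapsto H_2(t,x^{\bar u_2},u,\bar p,\bar k;q,\varphi,\delta)+\bar{\mathbb{E}}[H_2(\cdots)|\mathcal{F}_t^Y]$ over $\mathcal{U}_2$; differentiating this convex minimum in the admissible direction $v_2-\bar u_2$ yields $\langle \bar H_{2u_2}+\bar{\mathbb{E}}[\bar H_{2\hat u_2}|\mathcal{F}_t^Y],\,v_2-\bar u_2\rangle\ge 0$ a.e., $\bar{\mathbb{P}}$-a.s., so the whole expression is $\ge 0$ and $J_2^L(v_2)\ge J_2^L(\bar u_2)$, as claimed. (Here one uses that $U_2$ is convex, so $\bar u_2+\epsilon(v_2-\bar u_2)$ stays admissible, and that the conditional expectation of $H_2$ is differentiable in $u_2$ under the growth bounds of \textbf{(A1)}--\textbf{(A2)}; Clarke's generalized gradient, as in Yong and Zhou \cite{YZ99}, handles the step from the pointwise minimum to the directional first-order inequality cleanly.)

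The step I expect to be the main obstacle is the It\^o-duality bookkeeping: one must verify that every stochastic-integral term against $dW$ and $d\bar W$ that arises when expanding $d\big(\langle x^{v_2}-x^{\bar u_2},\varphi\rangle-\langle p^{v_2}-p^{\bar u_2},q\rangle\big)$ has zero $\bar{\mathbb{E}}$-expectation — which needs the integrands to be in $L^2_{\bar{\mathbb{P}}}$, guaranteed by Lemma \ref{lemma3.1}-type estimates and the square-integrability of the adjoint triple $(q,\varphi,\delta)$ — and that the conditional-mean-field terms $\bar{\mathbb{E}}[\,\cdot\,|\mathcal{F}_t^Y]$ in \eqref{adjoint eq22} pair correctly with the $\hat{}$-variables in the variational dynamics via the tower property $\bar{\mathbb{E}}[\langle \bar{\mathbb{E}}[\xi|\mathcal{F}_t^Y],\eta\rangle]=\bar{\mathbb{E}}[\langle \xi,\bar{\mathbb{E}}[\eta|\mathcal{F}_t^Y]\rangle]$. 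This is exactly the place where the general-$h$ case breaks down (as the authors note), and it is only the reduction $W^{\bar u_2}\equiv\bar W$ that makes the martingale terms vanish under the control-independent $\bar{\mathbb{E}}$; so the argument is essentially Theorem 3.2's proof transported to the richer six-tuple (here three-tuple) state, with convexity of $H_2$ and $G_2$ doing the work and \eqref{minimal condition} closing the estimate.
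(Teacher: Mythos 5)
Your proposal is correct and is essentially the argument the paper has in mind: the paper omits the proof, indicating only that it follows Yong--Zhou via Clarke's generalized gradient, and your convexity-plus-duality scheme --- It\^o's formula under $\bar{\mathbb{P}}$ applied to $\langle x^{v_2}-x^{\bar u_2},\varphi\rangle-\langle p^{v_2}-p^{\bar u_2},q\rangle$, with $G_{1x}=Mx$ making $p^{v_2}(T)-p^{\bar u_2}(T)=M(x^{v_2}(T)-x^{\bar u_2}(T))$ so that the $G_{1xx}q(T)$ part of $\varphi(T)$ exactly cancels $\langle q(T),p^{v_2}(T)-p^{\bar u_2}(T)\rangle$, followed by the tower-property re-pairing of the $\hat{\ }$-terms and the first-order consequence of \eqref{minimal condition} --- is precisely the verification argument of Theorem \ref{thm3.2} transported to the leader's reduced state, with the special case $h\equiv h(t)$ doing exactly the job you identify (making the reference measure control-independent so the stochastic integrals are $\bar{\mathbb{E}}$-martingales). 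The one point to make explicit is that the first-order lower bound on the increment of $H_2$ needs convexity jointly in the hatted arguments $(\hat x,\hat u_2,\hat p,\hat k)$ (through which $b^L,\sigma^L,f_1^L,l_2^L$ depend on $\bar u_1$) and not only in $(x^{u_2},u_2,p,k)$ as the hypothesis literally reads; this is a looseness in the theorem's statement rather than a gap in your argument, which handles those terms correctly.
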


\section{An LQ Stackelberg stochastic differential game with asymmetric noisy observations}

In this section, we deal with an LQ Stackelberg stochastic differential game with asymmetric noisy observations, where the maximum principle and verification theorem developed in the previous section will be useful tools. For notational simplicity, we only consider the case for $n=d_1=d_2=m_1=m_2=1$.

\subsection{The problem of the follower}

Let us consider the following controlled SDE:
\begin{equation}\label{lq sde}
\left\{
\begin{aligned}
dx^{u_1,u_2}(t)&=\big[A(t)x^{u_1,u_2}(t)+B_1(t)u_1(t)+B_2(t)u_2(t)\big]dt\\
&\quad+\big[C(t)x^{u_1,u_2}(t)+D_1(t)u_1(t)+D_2(t)u_2(t)\big]dW(t),\quad t\in[0,T],\\
x^{u_1,u_2}(0)&=x_0,
\end{aligned}
\right.
\end{equation}
and the observation equation:
\begin{equation}\label{observation lq}
\left\{
\begin{aligned}
dY(t)&=h(t)dt+d\bar{W}(t),\quad t\in[0,T],\\
Y(0)&=0,
\end{aligned}
\right.
\end{equation}
where $x_0\in\mathbb{R}$ and $A(\cdot),B_1(\cdot),B_2(\cdot),C(\cdot),D_1(\cdot),D_2(\cdot)$ and $h(\cdot)$ are given deterministic functions. We introduce the following assumption:

{\bf(H1)} $A(\cdot),B_1(\cdot),B_2(\cdot),C(\cdot),D_1(\cdot)$ and $D_2(\cdot)\in L^{\infty}(0,T;\mathbb{R})$.

Firstly, for any chosen $u_2(\cdot)\in\mathcal{U}_2$, the follower would like to choose an $\mathcal{F}_t^Y$-adapted control $\bar{u}_1(\cdot)$ to minimize his cost functional
\begin{equation}\label{cf1 lq}
\begin{aligned}
J_1(u_1(\cdot),u_2(\cdot))=\frac{1}{2}\bar{\mathbb{E}}\bigg[\int_0^T\Big\{Q_1(t)\big|x^{u_1,u_2}(t)\big|^2+R_1(t)\big|u_1(t)\big|^2\Big\}dt+G_1\big|x^{u_1,u_2}(T)\big|^2\bigg],
\end{aligned}
\end{equation}
where the expectation $\bar{\mathbb{E}}$ is corresponding to the probability measure $\mathbb{P}^{u_1,u_2}=\bar{\mathbb{P}}$ under which $W(\cdot)$ and $\bar{W}(\cdot)$ are independent standard Brownian motion mentioned in the previous section. And we also suppose that

{\bf(H2)} $Q_1(\cdot)\geq0,R_1(\cdot)>0$ and $G_1\geq0$ are bounded and deterministic, $R_1^{-1}(\cdot)$ is also bounded.

We write the follower's Hamiltonian function
\begin{equation}\label{H1 lq}
\begin{aligned}
&H_1(t,x,u_1,u_2,p,k)=\big[A(t)x+B_1(t)u_1+B_2(t)u_2\big]p(t)\\
&\quad+\big[C(t)x+D_1(t)u_1+D_2(t)u_2\big]k(t)+\frac{1}{2}Q_1(t)x^2+\frac{1}{2}R_1(t)u_1^2.
\end{aligned}
\end{equation}
From Theorem 3.1, if $\bar{u}_1(\cdot)$ is the optimal control, then we have
\begin{equation}\label{bar u_1}
\bar{u}_1(t)=-R_1^{-1}(t)\big[B_1(t)\hat{p}(t)+D_1(t)\hat{k}(t)\big],\quad a.e.\ t\in[0,T],\ \bar{\mathbb{P}}\mbox{-}a.s.,
\end{equation}
with $\hat{p}(t):=\bar{\mathbb{E}}[p(t)|\mathcal{F}_t^Y]$ and $\hat{k}(t):=\bar{\mathbb{E}}[k(t)|\mathcal{F}_t^Y]$, where $(p(\cdot),k(\cdot))$ is the $\mathcal{F}_t$-adapted solution to the following adjoint BSDE:
\begin{equation}\label{lq adjoint eq1}
\left\{
\begin{aligned}
-dp(t)&=\big[Q_1(t)x^{\bar{u}_1,u_2}(t)+A(t)p(t)+C(t)k(t)\big]dt-k(t)dW(t),\ t\in[0,T],\\
  p(T)&=G_1x^{\bar{u}_1,u_2}(T).
\end{aligned}
\right.
\end{equation}
Noticing that the representation of $\bar{u}_1(\cdot)$ contains the filtering estimate of the second component $k(\cdot)$ of the solution to \eqref{lq adjoint eq1}, since the control variables enter into the diffusion term of \eqref{lq sde}.

Observing the terminal condition in the equation \eqref{lq adjoint eq1}, and the appearance of $u_2(\cdot)$, we set
\begin{equation}\label{relation1}
p(t)=P(t)x^{\bar{u}_1,u_2}(t)+\Theta(t),\quad t\in[0,T],
\end{equation}
for some deterministic and differentiable $\mathbb{R}$-valued function $P(\cdot)$ with $P(T)=G_1$, and $\mathbb{R}$-valued, $\mathcal{F}_t$-adapted process pair $(\Theta(\cdot),\Gamma(\cdot))$ satisfying the BSDE:
\begin{equation}\label{theta beta}
d\Theta(t)=\Xi(t)dt+\Gamma(t)dY(t),\ t\in[0,T],\ \Theta(T)=0.
\end{equation}
In the above equation, $\Xi(\cdot)$ is an $\mathcal{F}_t$-adapted process to be determined later.
Applying It\^{o}'s formula to \eqref{relation1} and noting \eqref{observation lq}, \eqref{theta beta}, we have
\begin{equation}\label{dp}
\begin{aligned}
dp(t)&=\big[\dot{P}x^{\bar{u}_1,u_2}+APx^{\bar{u}_1,u_2}+PB_1\bar{u}_1+PB_2\bar{u}_2+\Xi+\Gamma h\big]dt\\
     &\quad+P\big[Cx^{\bar{u}_1,u_2}+D_1\bar{u}_1+D_2u_2\big]dW(t)+\Gamma d\bar{W}(t).
\end{aligned}
\end{equation}
Comparing \eqref{dp} and \eqref{lq adjoint eq1}, we get
\begin{equation}\label{dt term}
\begin{aligned}
-\big(Q_1x^{\bar{u}_1,u_2}+Ap+Ck\big)&=\dot{P}x^{\bar{u}_1,u_2}+APx^{\bar{u}_1,u_2}+PB_1\bar{u}_1+PB_2u_2+\Xi+\Gamma h,
\end{aligned}
\end{equation}
\begin{equation}\label{dW term}
\begin{aligned}
k=P\big(Cx^{\bar{u}_1,u_2}+D_1\bar{u}_1+D_2u_2\big),\quad \bar{\mathbb{P}}\mbox{-}a.s.,
\end{aligned}
\end{equation}
\begin{equation}\label{d bar W}
\begin{aligned}
\Gamma=0,\quad \bar{\mathbb{P}}\mbox{-}a.s.
\end{aligned}
\end{equation}
Thus \eqref{theta beta} has the unique $\mathcal{F}_t$-adapted solution $(\Theta(\cdot),0)$, which in fact reduces to a {\it backward random differential equation} (BRDE for short).

Substituting \eqref{relation1} and \eqref{dW term} into \eqref{bar u_1}, and supposing that 

{\bf(H3)} $(D_1^2P+R_1)^{-1}$ exist, 

\noindent we obtain
\begin{equation}\label{bar u_11}
\begin{aligned}
\bar{u}_1=&-(D_1^2P+R_1)^{-1}(B_1+D_1C)P\hat{x}^{\bar{u}_1,\hat{u}_2}-(D_1^2P+R_1)^{-1}B_1\hat{\Theta}\\
          &-(D_1^2P+R_1)^{-1}D_1D_2P\hat{u}_2,\quad a.e.\ t\in[0,T],\ \bar{\mathbb{P}}\mbox{-}a.s.,
\end{aligned}
\end{equation}
where $\hat{x}^{\bar{u}_1,\hat{u}_2}(t):=\bar{\mathbb{E}}[x^{\bar{u}_1,u_2}(t)|\mathcal{F}_t^Y]$, $\hat{\Theta}(t):=\bar{\mathbb{E}}[\Theta(t)|\mathcal{F}_t^Y]$ and $\hat{u}_2(t):=\bar{\mathbb{E}}[u_2(t)|\mathcal{F}_t^Y]$.

Inserting \eqref{relation1}, \eqref{dW term} and \eqref{bar u_11} into \eqref{dt term}, we obtain that if the Riccati equation:
\begin{equation}\label{pi}
\left\{
\begin{aligned}
&\dot{P}+2AP+C^2P-(D_1^2P+R_1)^{-1}(B_1+D_1C)^2P^2+Q_1=0,\quad t\in[0,T],\\
&P(T)=G_1,
\end{aligned}
\right.
\end{equation}
admits a unique solution $P(\cdot)$, then we have
\begin{equation}\label{alpha}
\begin{aligned}
\Xi&=-(B_1+D_1C)^2(D_1^2P+R_1)^{-1}P^2x^{\bar{u}_1,u_2}+(B_1+D_1C)^2(D_1^2P+R_1)^{-1}P^2\hat{x}^{\bar{u}_1,\hat{u}_2}\\
      &\quad-(B_2+D_2C)Pu_2+(B_1+D_1C)(D_1^2P+R_1)^{-1}D_1D_2P^2\hat{u}_2\\
      &\quad+(B_1+D_1C)(D_1^2P+R_1)^{-1}B_1P\hat{\Theta}-A\Theta.
\end{aligned}
\end{equation}
With \eqref{alpha}, the BRDE \eqref{theta beta} takes the form
\begin{equation}\label{theta}
\left\{
\begin{aligned}
-d\Theta(t)&=\big[(B_1+D_1C)^2(D_1^2P+R_1)^{-1}P^2\big(x^{\bar{u}_1,u_2}-\hat{x}^{\bar{u}_1,\hat{u}_2}\big)\\
           &\qquad+(B_2+D_2C)Pu_2-(B_1+D_1C)(D_1^2P+R_1)^{-1}D_1D_2P^2\hat{u}_2\\
           &\qquad-(B_1+D_1C)(D_1^2P+R_1)^{-1}B_1P\hat{\Theta}+A\Theta\big]dt,\quad t\in[0,T],\\
\Theta(T)&=0.
\end{aligned}
\right.
\end{equation}
Moreover, for given $u_2(\cdot)$, plugging \eqref{bar u_11} into \eqref{lq sde}, we derive
\begin{equation}
\left\{
\begin{aligned}
dx^{\bar{u}_1,u_2}(t)&=\big[Ax^{\bar{u}_1,u_2}-B_1(D_1^2P+R_1)^{-1}(B_1+D_1C)P\hat{x}^{\bar{u}_1,\hat{u}_2}-(D_1^2P+R_1)^{-1}B_1^2\hat{\Theta}\\
                     &\qquad-B_1(D_1^2P+R_1)^{-1}D_1D_2P\hat{u}_2+B_2u_2\big]dt+\big[Cx^{\bar{u}_1,u_2}\\
                     &\qquad-D_1(D_1^2P+R_1)^{-1}(B_1+D_1C)P\hat{x}^{\bar{u}_1,\hat{u}_2}-D_1(D_1^2P+R_1)^{-1}B_1\hat{\Theta}\\
                     &\qquad-(D_1^2P+R_1)^{-1}D_1^2D_2P\hat{u}_2+D_2u_2\big]dW(t),\quad t\in[0,T],\\
x^{\bar{u}_1,u_2}(0)&=x_0.
\end{aligned}
\right.
\end{equation}
Therefore, from the observation equation \eqref{observation lq} and applying Theorem 8.1 in Lisptser and Shiryayev \cite{LS77}, we can derive the following optimal filtering equation:
\begin{equation}\label{hat x}
\left\{
\begin{aligned}
d\hat{x}^{\bar{u}_1,\hat{u}_2}(t)&=\Big\{\big[A-B_1(D_1^2P+R_1)^{-1}(B_1+D_1C)P\big]\hat{x}^{\bar{u}_1,\hat{u}_2}-(D_1^2P+R_1)^{-1}B_1^2\hat{\Theta}\\
                                 &\qquad+\big[B_2-B_1(D_1^2P+R_1)^{-1}D_1D_2P\big]\hat{u}_2\Big\}dt,\quad t\in[0,T],\\
\hat{x}^{\bar{u}_1,\hat{u}_2}(0)&=x_0,
\end{aligned}
\right.
\end{equation}
which admits a unique $\mathcal{F}_t^Y$-adapted solution $\hat{x}^{\bar{u}_1,\hat{u}_2}(\cdot)$, as long as $\hat\Theta(\cdot)$ is determined.

In fact, similarly, by \eqref{theta} we have
\begin{equation}\label{hat theta}
\left\{
\begin{aligned}
-d\hat{\Theta}(t)&=\Big\{-\big[(B_1+D_1C)(D_1^2P+R_1)^{-1}D_1D_2P^2-(B_2+D_2C)P\big]\hat{u}_2\\
                 &\qquad-\big[(B_1+D_1C)(D_1^2P+R_1)^{-1}B_1P-A\big]\hat{\Theta}\Big\}dt,\quad t\in[0,T],\\
  \hat{\Theta}(T)&=0,
\end{aligned}
\right.
\end{equation}
which admits a unique $\mathcal{F}_t^Y$-adapted solution $\hat{\Theta}(\cdot)$, for given $\hat{u}_2(\cdot)$. Putting \eqref{hat x} and \eqref{hat theta} together, we get the following {\it forward-backward random differential filtering equation} (FBRDFE for short):
\begin{equation}\label{hat x theta}
\left\{
\begin{aligned}
d\hat{x}^{\bar{u}_1,\hat{u}_2}(t)&=\Big\{\big[A-B_1(D_1^2P+R_1)^{-1}(B_1+D_1C)P\big]\hat{x}^{\bar{u}_1,\hat{u}_2}-(D_1^2P+R_1)^{-1}B_1^2\hat{\Theta}\\
                                 &\qquad+\big[B_2-B_1(D_1^2P+R_1)^{-1}D_1D_2P\big]\hat{u}_2\Big\}dt,\\
-d\hat{\Theta}(t)&=\Big\{-\big[(B_1+D_1C)(D_1^2P+R_1)^{-1}D_1D_2P^2-(B_2+D_2C)P\big]\hat{u}_2\\
                 &\qquad-\big[(B_1+D_1C)(D_1^2P+R_1)^{-1}B_1P-A\big]\hat{\Theta}\Big\}dt,\quad t\in[0,T],\\
\hat{x}^{\bar{u}_1,\hat{u}_2}(0)&=x_0,\quad \hat{\Theta}(T)=0,
\end{aligned}
\right.
\end{equation}
which admits a unique $\mathcal{F}_t^Y$-adapted solution $(\hat{x}^{\bar{u}_1,\hat{u}_2}(\cdot),\hat{\Theta}(\cdot))\equiv(\hat{x}^{\bar{u}_1,\hat{u}_2}(\cdot),\hat{\Theta}(\cdot),0)$, for given $\hat{u}_2(\cdot)$.

Now, noting that the conditions in Theorem 3.2 are satisfied, we could summarize the above procedure in the following theorem.
\begin{theorem}
Let {\bf(H1)}-{\bf(H3)} hold, and $P(\cdot)$ satisfies \eqref{pi}. For given $u_2(\cdot)$ of the leader, $\bar{u}_1(\cdot)$ given by \eqref{bar u_11} is the state estimate feedback optimal control of the follower, where $(\hat{x}^{\bar{u}_1,\hat{u}_2}(\cdot),\hat{\Theta}(\cdot))$ is the unique $\mathcal{F}_t^Y$-adapted solution to \eqref{hat x theta}.
\end{theorem}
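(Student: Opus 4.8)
The plan is to read this statement as a direct application of the verification theorem (Theorem \ref{thm3.2}) to the LQ data, the explicit candidate \eqref{bar u_11} having already been produced by the maximum principle together with the Riccati decoupling. I would organize the argument into four steps: verify the structural hypotheses of Theorem \ref{thm3.2}; re-obtain the feedback form \eqref{bar u_11}; establish well-posedness of the closed-loop filtering system \eqref{hat x theta} and admissibility of $\bar u_1(\cdot)$; and invoke Theorem \ref{thm3.2} to conclude optimality.

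First I would check the structural assumptions of Theorem \ref{thm3.2}. Since $h(t,x,u_1,u_2)\equiv h(t)$ here, the density $Z^{u_1,u_2}(\cdot)$ in \eqref{Z-u1u2} does not depend on $(u_1,u_2)$ and is a functional of $Y(\cdot)$ alone, hence $\mathcal{F}_t^Y$-adapted. By {\bf(H2)} the map $(x,u_1)\mapsto H_1(t,x,u_1,u_2,p,k)$ in \eqref{H1 lq} is convex --- strictly so in $u_1$, because $R_1(t)>0$ --- and $x\mapsto\frac12 G_1|x|^2$ is convex since $G_1\geq0$. Thus only the conditional minimality \eqref{min} remains to be checked for the candidate.

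Next I would re-derive \eqref{bar u_11} and set up the closed-loop dynamics. Theorem \ref{thm3.1} applied to \eqref{H1 lq} yields the first-order condition \eqref{bar u_1}, with $(p,k)$ the $\mathcal{F}_t$-adapted solution of the linear BSDE \eqref{lq adjoint eq1}. Inserting the ansatz \eqref{relation1}, $p=Px^{\bar u_1,u_2}+\Theta$ with deterministic $P$, $P(T)=G_1$, and $(\Theta,\Gamma)$ as in \eqref{theta beta}, applying It\^{o}'s formula and matching the $dt$, $dW$ and $d\bar W$ coefficients against \eqref{lq adjoint eq1} gives $\Gamma=0$, the identity \eqref{dW term} for $k$, the Riccati equation \eqref{pi} for $P$, and the backward random differential equation \eqref{theta} for $\Theta$; substituting \eqref{dW term} into \eqref{bar u_1} and inverting $D_1^2P+R_1$ under {\bf(H3)} yields exactly \eqref{bar u_11}. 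Plugging \eqref{bar u_11} back into \eqref{lq sde} and applying Theorem~8.1 of Liptser and Shiryayev \cite{LS77} to the observation \eqref{observation lq} (whose drift is free of the state) gives the filtering equation \eqref{hat x}, while taking $\bar{\mathbb{E}}[\,\cdot\,|\mathcal{F}_t^Y]$ in \eqref{theta} gives \eqref{hat theta}; the coupled system \eqref{hat x theta} is linear with bounded coefficients and triangular (the backward $\hat\Theta$-equation does not see $\hat x^{\bar u_1,\hat u_2}$), hence uniquely $\mathcal{F}_t^Y$-solvable, and \eqref{bar u_11} then defines an $\mathcal{F}_t^Y$-adapted process with finite moments of every order, so $\bar u_1(\cdot)\in\mathcal{U}_1$.

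It then remains to check the conditional minimality \eqref{min}: since $u_1\mapsto H_1$ is strictly convex, the stationarity condition \eqref{bar u_1} satisfied by $\bar u_1$ is equivalent to \eqref{min}, so all hypotheses of Theorem \ref{thm3.2} hold and $\bar u_1(\cdot)$ is optimal for {\bf Problem of the follower}. I expect the main obstacle to lie not in this verification step --- essentially a citation of Theorem \ref{thm3.2} --- but in the internal consistency of the decoupling: one must confirm that the Riccati ansatz, the identity \eqref{dW term} and the filtering representations fit together so that $(Px^{\bar u_1,u_2}+\Theta,k)$ genuinely solves the adjoint BSDE \eqref{lq adjoint eq1} along the closed-loop trajectory, and that the closed-loop state equation has coefficients of the form required for the Liptser--Shiryayev filtering theorem to apply. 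A secondary subtlety is that the minimization in \eqref{min} is to be read over the explicit control slot with the state frozen at $x^{\bar u_1,u_2}$ (as used in the proof of Theorem \ref{thm3.2}), which is exactly why strict convexity in $u_1$ suffices to pass from \eqref{bar u_1} to \eqref{min}.
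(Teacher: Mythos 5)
Your proposal is correct and follows essentially the same route as the paper: the paper's "proof" of this theorem is precisely the derivation preceding it (first-order condition \eqref{bar u_1} from Theorem \ref{thm3.1}, the ansatz \eqref{relation1} with coefficient matching to get \eqref{pi}, \eqref{dW term}, $\Gamma=0$ and \eqref{theta}, the Liptser--Shiryayev filtering step giving \eqref{hat x theta}, and finally the remark that the hypotheses of Theorem \ref{thm3.2} are satisfied). Your version is if anything slightly more careful, since you spell out explicitly why $Z^{u_1,u_2}$ is $\mathcal{F}_t^Y$-adapted when $h\equiv h(t)$, why the convexity and conditional-minimality conditions of Theorem \ref{thm3.2} hold, and why the triangular structure of \eqref{hat x theta} gives unique solvability.
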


\subsection{Problem of the leader}

Since the leader knows that the follower will take $\bar{u}_1(\cdot)$ by \eqref{bar u_11}, the state equation of the leader can be written as:
\begin{equation}\label{lq leader state}
\left\{
\begin{aligned}
dx^{u_2}(t)&=\big[Ax^{u_2}-B_1(D_1^2P+R_1)^{-1}(B_1+D_1C)P\hat{x}^{\hat{u}_2}-(D_1^2P+R_1)^{-1}B_1^2\hat{\Theta}\\
           &\qquad-B_1(D_1^2P+R_1)^{-1}D_1D_2P\hat{u}_2+B_2u_2\big]dt\\
           &\quad+\big[Cx^{u_2}-D_1(D_1^2P+R_1)^{-1}(B_1+D_1C)P\hat{x}^{\hat{u}_2}-D_1(D_1^2P+R_1)^{-1}B_1\hat{\Theta}\\
           &\qquad-(D_1^2P+R_1)^{-1}D_1^2D_2P\hat{u}_2+D_2u_2]dW(t),\\
-d\hat{\Theta}(t)&=\Big\{-\big[(B_1+D_1C)(D_1^2P+R_1)^{-1}D_1D_2P^2-(B_2+D_2C)P\big]\hat{u}_2\\
                 &\qquad-\big[(B_1+D_1C)(D_1^2P+R_1)^{-1}B_1P-A\big]\hat{\Theta}\Big\}dt,\quad t\in[0,T],\\
x^{u_2}(0)&=x_0,\quad \hat{\Theta}(T)=0,
\end{aligned}
\right.
\end{equation}
where $x^{u_2}(\cdot):=x^{\bar{u}_1,u_2}(\cdot)$ and $\hat{x}^{\hat{u}_2}(\cdot):=\hat{x}^{\bar{u}_1,\hat{u}_2}(\cdot)$.
Note that \eqref{lq leader state} is a decoupled conditional mean-field FBSDE, and its solvability can be easily obtained.
The leader would like to choose an $\mathcal{F}_t$-adapted control $\bar{u}_2(\cdot)$ to minimize his cost functional
\begin{equation}\label{cf2 lq}
\begin{aligned}
J_2(u_2(\cdot))=\frac{1}{2}\bar{\mathbb{E}}\bigg[\int_0^T\Big\{Q_2(t)\big|x^{u_2}(t)\big|^2+R_2(t)\big|u_2(t)\big|^2\Big\}dt+G_2\big|x^{u_2}(T)\big|^2\bigg].
\end{aligned}
\end{equation}
We suppose

{\bf(H4)}
$Q_2(\cdot)\geq0,R_2(\cdot)>0$ and $G_2\geq0$ are bounded and deterministic, $R_2^{-1}(\cdot)$ is also bounded.

Applying Theorem 3.3 and Theorem 3.4, we can write the leader's Hamiltonian function
\begin{equation}
\begin{aligned}
&H_2(t,x^{u_2},u_2,\hat{\Theta};q,\varphi,\delta)\\
&=\frac{1}{2}Q_2(x^{u_2})^2+\frac{1}{2}R_2u_2^2+\big[Ax^{u_2}-B_1(D_1^2P+R_1)^{-1}(B_1+D_1C)P\hat{x}^{\hat{u}_2}\\
&\quad-(D_1^2P+R_1)^{-1}B_1^2\hat{\Theta}-B_1(D_1^2P+R_1)^{-1}D_1D_2P\hat{u}_2+B_2u_2\big]\varphi\\
&\quad+\big[Cx^{u_2}-D_1(D_1^2P+R_1)^{-1}(B_1+D_1C)P\hat{x}^{\hat{u}_2}-D_1(D_1^2P+R_1)^{-1}B_1\hat{\Theta}\\
&\quad-(D_1^2P+R_1)^{-1}D_1^2D_2P\hat{u}_2+D_2u_2\big]\delta-\Big\{\big[(B_1+D_1C)(D_1^2P+R_1)^{-1}D_1D_2P^2\\
&\quad-(B_2+D_2C)P\big]\hat{u}_2+\big[(B_1+D_1C)(D_1^2P+R_1)^{-1}B_1P-A\big]\hat{\Theta}\Big\}q.
\end{aligned}
\end{equation}
The optimal control $u_2(\cdot)$ of the leader satisfies:
\begin{equation}\label{bar u_2}
\begin{aligned}
&R_2\bar{u}_2+B_2\varphi+D_2\delta-B_1(D_1^2P+R_1)^{-1}D_1D_2P\hat{\varphi}-(D_1^2P+R_1)^{-1}D_1^2D_2P\hat{\delta}\\
&-\big[(B_1+D_1C)(D_1^2P+R_1)^{-1}D_1D_2P^2-(B_2+D_2C)P\big]\hat{q}=0,\quad a.e.\ t\in[0,T],\ \bar{\mathbb{P}}\mbox{-}a.s.,
\end{aligned}
\end{equation}
with $\hat{\varphi}(t):=\bar{\mathbb{E}}[\varphi(t)|\mathcal{F}_t^Y],\hat{\delta}(t):=\bar{\mathbb{E}}[\delta(t)|\mathcal{F}_t^Y]$ and $\hat{q}(t):=\bar{\mathbb{E}}[q(t)|\mathcal{F}_t^Y]$, where $(q(\cdot),\varphi(\cdot),\delta(\cdot))$ satisfies the adjoint FBSDE:
\begin{equation}\label{lq leader adjoint}
\left\{
\begin{aligned}
       dq(t)&=\Big\{-(D_1^2P+R_1)^{-1}B_1^2\varphi-D_1(D_1^2P+R_1)^{-1}B_1\delta\\
            &\qquad-\big[(B_1+D_1C)(D_1^2P+R_1)^{-1}B_1P-A\big]q\Big\}dt,\\
-d\varphi(t)&=\big[Q_2x^{\bar{u}_2}+A\varphi+C\delta-B_1(D_1^2P+R_1)^{-1}(B_1+D_1C)P\hat{\varphi}\\
            &\qquad-D_1(D_1^2P+R_1)^{-1}(B_1+D_1C)P\hat{\delta}\big]dt-\delta(t)dW(t),\quad t\in[0,T],\\
        q(0)&=0,\ \varphi(T)=G_2x^{\bar{u}_2}(T)+G_1q(T).
\end{aligned}
\right.
\end{equation}
Next, for obtaining the state feedback representation of $\bar{u}_2(\cdot)$ via some Riccati equations, let us put \eqref{lq leader state} (for $\bar{u}_2(\cdot)$) and \eqref{lq leader adjoint} together and regard $\begin{pmatrix}x^{\bar{u}_2}(\cdot)\\q(\cdot)\end{pmatrix}$ as the optimal ``state":
\begin{equation}\label{lq leader state2}
\left\{
\begin{aligned}
dx^{\bar{u}_2}(t)&=\big[Ax^{\bar{u}_2}-B_1(D_1^2P+R_1)^{-1}(B_1+D_1C)P\hat{x}^{\hat{\bar{u}}_2}-(D_1^2P+R_1)^{-1}B_1^2\hat{\bar{\Theta}}\\
                 &\qquad-B_1(D_1^2P+R_1)^{-1}D_1D_2P\hat{\bar{u}}_2+B_2\bar{u}_2\big]dt\\
                 &\quad+\big[Cx^{\bar{u}_2}-D_1(D_1^2P+R_1)^{-1}(B_1+D_1C)P\hat{x}^{\hat{\bar{u}}_2}-D_1(D_1^2P+R_1)^{-1}B_1\hat{\bar{\Theta}}\\
                 &\qquad-(D_1^2P+R_1)^{-1}D_1^2D_2P\hat{\bar{u}}_2+D_2\bar{u}_2\big]dW(t),\\
       dq(t)&=\Big\{-(D_1^2P+R_1)^{-1}B_1^2\varphi-D_1(D_1^2P+R_1)^{-1}B_1\delta\\
            &\qquad-\big[(B_1+D_1C)(D_1^2P+R_1)^{-1}B_1P-A\big]q\Big\}dt,\\
-d\varphi(t)&=\big[Q_2x^{\bar{u}_2}+A\varphi+C\delta-B_1(D_1^2P+R_1)^{-1}(B_1+D_1C)P\hat{\varphi}\\
            &\qquad-D_1(D_1^2P+R_1)^{-1}(B_1+D_1C)P\hat{\delta}\big]dt-\delta(t)dW(t),\\
-d\hat{\bar{\Theta}}(t)&=\Big\{-\big[(B_1+D_1C)(D_1^2P+R_1)^{-1}D_1D_2P^2-(B_2+D_2C)P\big]\hat{\bar{u}}_2\\
                 &\qquad-\big[(B_1+D_1C)(D_1^2P+R_1)^{-1}B_1P-A\big]\hat{\bar{\Theta}}\Big\}dt,\quad t\in[0,T],\\
x^{\bar{u}_2}(0)&=x_0,\ q(0)=0,\ \varphi(T)=G_2x^{\bar{u}_2}(T)+G_1q(T),\ \hat{\bar{\Theta}}(T)=0.
\end{aligned}
\right.
\end{equation}
Then, we put
\begin{equation}\label{notation}
X=
\begin{pmatrix}
x^{\bar{u}_2}\\
q
\end{pmatrix},\ \
Y=
\begin{pmatrix}
\varphi\\
\hat{\bar{\Theta}}
\end{pmatrix},\ \
Z=
\begin{pmatrix}
\delta\\
0
\end{pmatrix},\ \
X_0=
\begin{pmatrix}
x_0\\
0
\end{pmatrix},\ \
\bar{G}=
\begin{pmatrix}
G_2&G_1\\
0&0
\end{pmatrix},
\end{equation}
and
\begin{equation*}
\left\{
\begin{aligned}
&\mathcal{A}_1=
\begin{pmatrix}
A&0\\
0&-\big[(B_1+D_1C)(D_1^2P+R_1)^{-1}B_1P-A\big]
\end{pmatrix},\\
&\mathcal{A}_2=
\begin{pmatrix}
-B_1(D_1^2P+R_1)^{-1}(B_1+D_1C)P&0\\
0&0
\end{pmatrix},\ \
\mathcal{A}_3=
\begin{pmatrix}
C&0\\
0&0
\end{pmatrix},\\
&\mathcal{A}_4=
\begin{pmatrix}
-D_1(D_1^2P+R_1)^{-1}(B_1+D_1C)P&0\\
0&0
\end{pmatrix},\ \
\mathcal{A}_5=
\begin{pmatrix}
Q&0\\
0&0
\end{pmatrix},\\
&\mathcal{B}_1=
\begin{pmatrix}
0&-(D_1^2P+R_1)^{-1}B_1^2\\
-(D_1^2P+R_1)^{-1}B_1^2&0
\end{pmatrix},\\
&\mathcal{C}_1=
\begin{pmatrix}
0&0\\
-D_1(D_1^2P+R_1)^{-1}B_1&0
\end{pmatrix},\ \
\mathcal{D}_1=
\begin{pmatrix}
-B_1(D_1^2P+R_1)^{-1}D_1D_2P\\
0
\end{pmatrix},\\
&\mathcal{D}_2=
\begin{pmatrix}
B_2\\
0
\end{pmatrix},\ \
\mathcal{D}_3=
\begin{pmatrix}
-(D_1^2P+R_1)^{-1}D_1^2D_2P\\
0
\end{pmatrix},\ \
\mathcal{D}_4=
\begin{pmatrix}
D_2\\
0
\end{pmatrix},\\
&\mathcal{D}_5=
\begin{pmatrix}
0\\
-\big[(B_1+D_1C)(D_1^2P+R_1)^{-1}D_1D_2P^2-(B_2+D_2C)P\big]
\end{pmatrix}.
\end{aligned}
\right.
\end{equation*}
Then the equation \eqref{lq leader state2} can be rewritten as
\begin{equation}\label{lq leader state3}
\left\{
\begin{aligned}
 dX(t)&=\big[\mathcal{A}_1X+\mathcal{A}_2\hat{X}+\mathcal{B}_1Y+\mathcal{C}_1Z+\mathcal{D}_1\hat{\bar{u}}_2+\mathcal{D}_2\bar{u}_2\big]dt\\
      &\quad+\big[\mathcal{A}_3X+\mathcal{A}_4\hat{X}+\mathcal{C}_1^\top Y+\mathcal{D}_3\hat{\bar{u}}_2+\mathcal{D}_4\bar{u}_2\big]dW(t),\\
-dY(t)&=\big[\mathcal{A}_5X+\mathcal{A}_1Y+\mathcal{A}_2\hat{Y}+\mathcal{A}_3Z+\mathcal{A}_4\hat{Z}+\mathcal{D}_5\hat{\bar{u}}_2\big]dt-Z(t)dW(t),\ t\in[0,T],\\
  X(0)&=X_0,\ Y(T)=\bar{G}X(T),
\end{aligned}
\right.
\end{equation}
and \eqref{bar u_2} can be represented as
\begin{equation}\label{bar u_22}
\begin{aligned}
R_2\bar{u}_2&+\mathcal{D}_2^\top Y+\mathcal{D}_4^\top Z+\mathcal{D}_1^\top\hat{Y}+\mathcal{D}_3^\top\hat{Z}+\mathcal{D}_5^\top\hat{X}=0,\ a.e.\ t\in[0,T],\ \bar{\mathbb{P}}\mbox{-}a.s.
\end{aligned}\end{equation}
Thus we have
\begin{equation}\label{bar u_222}
\bar{u}_2=-R_2^{-1}\big[\mathcal{D}_2^\top Y+\mathcal{D}_4^\top Z+\mathcal{D}_1^\top\hat{Y}+\mathcal{D}_3^\top\hat{Z}+\mathcal{D}_5^\top\hat{X}\big],\ a.e.\ t\in[0,T],\ \bar{\mathbb{P}}\mbox{-}a.s.,
\end{equation}
and
\begin{equation}\label{hat bar u_222}
\hat{\bar{u}}_2=-R_2^{-1}\big[(\mathcal{D}_1+\mathcal{D}_2)^\top\hat{Y}+(\mathcal{D}_3+\mathcal{D}_4)^\top\hat{Z}+\mathcal{D}_5^\top\hat{X}\big],\ a.e.\ t\in[0,T],\ \bar{\mathbb{P}}\mbox{-}a.s.
\end{equation}
Inserting \eqref{bar u_222} and \eqref{hat bar u_222} into \eqref{lq leader state3}, we get
\begin{equation}\label{lq leader state4}
\left\{
\begin{aligned}
 dX(t)&=\Big\{\mathcal{A}_1X+\big[\mathcal{A}_2-(\mathcal{D}_1+\mathcal{D}_2)R_2^{-1}\mathcal{D}_5^\top\big]\hat{X}+(\mathcal{B}_1-\mathcal{D}_2R_2^{-1}\mathcal{D}_2^{\top})Y\\
      &\qquad-\big[\mathcal{D}_1R_2^{-1}(\mathcal{D}_1+\mathcal{D}_2)^\top+\mathcal{D}_2R_2^{-1}\mathcal{D}_1^\top\big]\hat{Y}
       +(\mathcal{C}_1-\mathcal{D}_2R_2^{-1}\mathcal{D}_4^{\top})Z\\
      &\qquad-\big[\mathcal{D}_1R_2^{-1}(\mathcal{D}_3+\mathcal{D}_4)^\top+\mathcal{D}_2R_2^{-1}\mathcal{D}_3^\top\big]\hat{Z}\Big\}dt\\
      &\quad+\Big\{\mathcal{A}_3X+\big[\mathcal{A}_4-(\mathcal{D}_3+\mathcal{D}_4)R_2^{-1}\mathcal{D}_5^\top\big]\hat{X}
       +(\mathcal{C}^\top-\mathcal{D}_4R_2^{-1}\mathcal{D}_2^\top)Y\\
      &\qquad-\big[\mathcal{D}_3R_2^{-1}(\mathcal{D}_1+\mathcal{D}_2)^\top+\mathcal{D}_4R_2^{-1}\mathcal{D}_1^\top\big]\hat{Y}-\mathcal{D}_4R_2^{-1}\mathcal{D}_4^{\top}Z\\
      &\qquad-\big[\mathcal{D}_3R_2^{-1}(\mathcal{D}_3+\mathcal{D}_4)^\top+\mathcal{D}_4R_2^{-1}\mathcal{D}_3^\top\big]\hat{Z}\Big\}dW(t),\\
-dY(t)&=\Big\{\mathcal{A}_5X+\mathcal{A}_1Y+\big[\mathcal{A}_2-\mathcal{D}_5R_2^{-1}(\mathcal{D}_1+\mathcal{D}_2)^\top\big]\hat{Y}+\mathcal{A}_3Z\\
      &\qquad+\big[\mathcal{A}_4-\mathcal{D}_5R_2^{-1}(\mathcal{D}_3+\mathcal{D}_4)^\top\big]\hat{Z}-\mathcal{D}_5R_2^{-1}\mathcal{D}_5^{\top}\hat{X}\Big\}dt-ZdW(t),\ t\in[0,T],\\
  X(0)&=X_0,\ Y(T)=\bar{G}X(T).
\end{aligned}
\right.
\end{equation}
In order to decouple the conditional mean-field system \eqref{lq leader state4}, we set
\begin{equation}\label{relation2}
Y(t)=\Pi_1(t)X(t)+\Pi_2(t)\hat{X}(t),
\end{equation}
where $\Pi_1(\cdot)$ and $\Pi_2(\cdot)$ are both differentiable, deterministic matrix-valued functions with $\Pi_1(T)=\bar{G}$ and $\Pi_2(T)=0$.

First, from the forward equation of \eqref{lq leader state4}, applying again Theorem 8.1 in \cite{LS77}, we obtain
\begin{equation}\label{hat X}
\left\{
\begin{aligned}
d\hat{X}(t)&=\Big\{\big[\mathcal{A}_1+\mathcal{A}_2-(\mathcal{D}_1+\mathcal{D}_2)R_2^{-1}\mathcal{D}_5^\top\big]\hat{X}
            +\big[\mathcal{B}_1-(\mathcal{D}_1+\mathcal{D}_2)R_2^{-1}(\mathcal{D}_1+\mathcal{D}_2)^\top\big]\hat{Y}\\
           &\quad+\big[\mathcal{C}_1-(\mathcal{D}_1+\mathcal{D}_2)R_2^{-1}(\mathcal{D}_3+\mathcal{D}_4)^\top\big]\hat{Z}\Big\}dt,\ t\in[0,T],\\
 \hat{X}(0)&=X_0.
\end{aligned}
\right.
\end{equation}
Applying It\^{o}'s formula to \eqref{relation2}, we have
\begin{equation}\label{dY}
\begin{aligned}
dY(t)&=\Big\{\dot{\Pi}_1X+\Pi_1\mathcal{A}_1X+\Pi_1\big[\mathcal{A}_2-(\mathcal{D}_1+\mathcal{D}_2)R_2^{-1}\mathcal{D}_5^\top\big]\hat{X}
      +\Pi_1(\mathcal{B}_1-\mathcal{D}_2R_2^{-1}\mathcal{D}_2^\top)Y\\
     &\qquad-\Pi_1\big[\mathcal{D}_1R_2^{-1}(\mathcal{D}_1+\mathcal{D}_2)^\top+\mathcal{D}_2R_2^{-1}\mathcal{D}_1^\top\big]\hat{Y}
      +\Pi_1(\mathcal{C}_1-\mathcal{D}_2R_2^{-1}\mathcal{D}_4^\top)Z\\
     &\qquad-\Pi_1\big[\mathcal{D}_1R_2^{-1}(\mathcal{D}_3+\mathcal{D}_4)^\top+\mathcal{D}_2R_2^{-1}\mathcal{D}_3^\top\big]\hat{Z}+\dot{\Pi}_2\hat{X}
      +\Pi_2\big[\mathcal{A}_1+\mathcal{A}_2\\
     &\qquad-(\mathcal{D}_1+\mathcal{D}_2)R_2^{-1}\mathcal{D}_5^\top\big]\hat{X}
      +\Pi_2\big[\mathcal{B}_1-(\mathcal{D}_1+\mathcal{D}_2)R_2^{-1}(\mathcal{D}_1+\mathcal{D}_2)^\top\big]\hat{Y}\\
     &\qquad+\Pi_2\big[\mathcal{C}_1-(\mathcal{D}_1+\mathcal{D}_2)R_2^{-1}(\mathcal{D}_3+\mathcal{D}_4)^\top\big]\hat{Z}\Big\}dt\\
     &\quad+\Big\{\Pi_1\mathcal{A}_3X+\Pi_1\big[\mathcal{A}_4-(\mathcal{D}_3+\mathcal{D}_4)R_2^{-1}\mathcal{D}_5^\top\big]\hat{X}
      +\Pi_1(\mathcal{C}_1^\top-\mathcal{D}_4R_2^{-1}\mathcal{D}_2^\top)Y\\
     &\qquad-\Pi_1\big[\mathcal{D}_3R_2^{-1}(\mathcal{D}_1+\mathcal{D}_2)^\top+\mathcal{D}_4R_2^{-1}\mathcal{D}_1^\top\big]\hat{Y}-\Pi_1\mathcal{D}_4R_2^{-1}\mathcal{D}_4^{\top}Z\\
     &\qquad-\Pi_1\big[\mathcal{D}_3R_2^{-1}(\mathcal{D}_3+\mathcal{D}_4)^\top+\mathcal{D}_4R_2^{-1}\mathcal{D}_3^\top\big]\hat{Z}\Big\}dW(t).
\end{aligned}
\end{equation}
Comparing the diffusion term between the BSDE in \eqref{lq leader state4} and \eqref{dY}, it yields
\begin{equation}\label{dW term2}
\begin{aligned}
Z&=\Pi_1\mathcal{A}_3X+\Pi_1\big[\mathcal{A}_4-(\mathcal{D}_3+\mathcal{D}_4)R_2^{-1}\mathcal{D}_5^\top\big]\hat{X}
  +\Pi_1(\mathcal{C}_1^\top-\mathcal{D}_4R_2^{-1}\mathcal{D}_2^\top)Y\\
 &\quad-\Pi_1\big[\mathcal{D}_3R_2^{-1}(\mathcal{D}_1+\mathcal{D}_2)^\top+\mathcal{D}_4R_2^{-1}\mathcal{D}_1^\top\big]\hat{Y}-\Pi_1\mathcal{D}_4R_2^{-1}\mathcal{D}_4^\top Z\\
 &\quad-\Pi_1\big[\mathcal{D}_3R_2^{-1}(\mathcal{D}_3+\mathcal{D}_4)^\top+\mathcal{D}_4R_2^{-1}\mathcal{D}_3^\top\big]\hat{Z},\quad \bar{\mathbb{P}}\mbox{-}a.s.
\end{aligned}
\end{equation}
Taking $\bar{\mathbb{E}}[\cdot|\mathcal{F}_t^Y]$ on both sides of \eqref{dW term2}, and supposing that 

{\bf(H5)} $\mathcal{M}_1:=\big[I+\Pi_1(\mathcal{D}_3+\mathcal{D}_4)R_2^{-1}(\mathcal{D}_3+\mathcal{D}_4)^\top\big]^{-1}$ exists, 

\noindent we have
\begin{equation}\label{hat Z}
\hat{Z}=\Sigma_1(\Pi_1,\Pi_2)\hat{X},\quad \bar{\mathbb{P}}\mbox{-}a.s.,
\end{equation}
where
\begin{equation*}
\begin{aligned}
\Sigma_1(\Pi_1,\Pi_2)&:=\mathcal{M}_1\Big\{\Pi_1\big[\mathcal{A}_3+\mathcal{A}_4-(\mathcal{D}_3+\mathcal{D}_4)R_2^{-1}\mathcal{D}_5^\top\big]\\
                     &\qquad+\Pi_1\big[\mathcal{C}_1^\top-(\mathcal{D}_3+\mathcal{D}_4)R_2^{-1}(\mathcal{D}_1+\mathcal{D}_2)^\top\big](\Pi_1+\Pi_2)\Big\}.
\end{aligned}
\end{equation*}
Then putting \eqref{hat Z} back into \eqref{dW term2}, and supposing that 

{\bf(H6)} $\mathcal{M}_2:=\big[I+\Pi_1\mathcal{D}_4R_2^{-1}\mathcal{D}_4^\top\big]^{-1}$ exists, 

\noindent we get
\begin{equation}\label{Z}
Z=\Sigma_2(\Pi_1)X+\Sigma_3(\Pi_1,\Pi_2)\hat{X},\quad \bar{\mathbb{P}}\mbox{-}a.s.,
\end{equation}
where
\begin{equation*}
\begin{aligned}
      \Sigma_2(\Pi_1)&:=\mathcal{M}_2\big[\Pi_1\mathcal{A}_3+\Pi_1(\mathcal{C}_1^\top-\mathcal{D}_4R_2^{-1}\mathcal{D}_2^\top)\Pi_1\big],\\
\Sigma_3(\Pi_1,\Pi_2)&:=\mathcal{M}_2\Big\{\Pi_1\big[\mathcal{A}_4-(\mathcal{D}_3+\mathcal{D}_4)R_2^{-1}\mathcal{D}_5^\top\big]
                      +\Pi_1(\mathcal{C}_1^\top-\mathcal{D}_4R_2^{-1}\mathcal{D}_2^\top)\Pi_2\\
                     &\qquad-\Pi_1\big[\mathcal{D}_3R_2^{-1}(\mathcal{D}_1+\mathcal{D}_2)^\top+\mathcal{D}_4R_2^{-1}\mathcal{D}_1^\top\big](\Pi_1+\Pi_2)\\
                     &\qquad-\Pi_1\big[\mathcal{D}_3R_2^{-1}(\mathcal{D}_3+\mathcal{D}_4)^\top+\mathcal{D}_4R_2^{-1}\mathcal{D}_3^\top\big]\Sigma_1(\Pi_1,\Pi_2)\Big\}.
\end{aligned}
\end{equation*}
Next, comparing the drift term between the BSDE in \eqref{lq leader state4} and \eqref{dY}, it leads to
\begin{equation}\label{dt term2}
\begin{aligned}
&\dot{\Pi}_1X+\Pi_1\mathcal{A}_1X+\Pi_1\big[\mathcal{A}_2-(\mathcal{D}_1+\mathcal{D}_2)R_2^{-1}\mathcal{D}_5^\top\big]\hat{X}
 +\Pi_1(\mathcal{B}_1-\mathcal{D}_2R_2^{-1}\mathcal{D}_2^\top)Y\\
&-\Pi_1\big[\mathcal{D}_1R_2^{-1}(\mathcal{D}_1+\mathcal{D}_2)^\top+\mathcal{D}_2R_2^{-1}\mathcal{D}_1^\top\big]\hat{Y}
 +\Pi_1(\mathcal{C}_1-\mathcal{D}_2R_2^{-1}\mathcal{D}_4^\top)Z\\
&-\Pi_1\big[\mathcal{D}_1R_2^{-1}(\mathcal{D}_3+\mathcal{D}_4)^\top+\mathcal{D}_2R_2^{-1}\mathcal{D}_3^\top\big]\hat{Z}
 +\dot{\Pi}_2\hat{X}+\Pi_2\big[\mathcal{A}_1+\mathcal{A}_2-(\mathcal{D}_1+\mathcal{D}_2)R_2^{-1}\mathcal{D}_5^\top\big]\hat{X}\\
&+\Pi_2\big[\mathcal{B}_1-(\mathcal{D}_1+\mathcal{D}_2)R_2^{-1}(\mathcal{D}_1+\mathcal{D}_2)^\top\big]\hat{Y}
 +\Pi_2\big[\mathcal{C}_1-(\mathcal{D}_1+\mathcal{D}_2)R_2^{-1}(\mathcal{D}_3+\mathcal{D}_4)^\top\big]\hat{Z}\\
&+\mathcal{A}_5X+\mathcal{A}_1Y+\big[\mathcal{A}_2-\mathcal{D}_5R_2^{-1}(\mathcal{D}_1+\mathcal{D}_2)^\top\big]\hat{Y}+\mathcal{A}_3Z
+\big[\mathcal{A}_4-\mathcal{D}_5R_2^{-1}(\mathcal{D}_3+\mathcal{D}_4)^\top\big]\hat{Z}\\
&-\mathcal{D}_5R_2^{-1}\mathcal{D}_5^\top\hat{X}=0.
\end{aligned}
\end{equation}
After inserting \eqref{relation2}, \eqref{hat Z} and \eqref{Z} into \eqref{dt term2}, we derive the following two Riccati equations:
\begin{equation}\label{R1}
\left\{
\begin{aligned}
&\dot{\Pi}_1+\Pi_1\mathcal{A}_1+\mathcal{A}_1\Pi_1+\Pi_1(\mathcal{B}_1-\mathcal{D}_2R_2^{-1}\mathcal{D}_2^\top)\Pi_1+\mathcal{A}_5
 +\big[\mathcal{A}_3+\Pi_1(\mathcal{C}_1-\mathcal{D}_2R_2^{-1}\mathcal{D}_4^\top)\big]\\
&\quad\times\mathcal{M}_2\Pi_1\big[\mathcal{A}_3+(\mathcal{C}_1^\top-\mathcal{D}_4R_2^{-1}\mathcal{D}_2^\top)\Pi_1\big]=0,\ t\in[0,T],\\
&\Pi_1(T)=\bar{G},
\end{aligned}
\right.
\end{equation}
\begin{equation}\label{R2}
\left\{
\begin{aligned}
&\dot{\Pi}_2+(\Pi_1+\Pi_2)\big[\mathcal{A}_2-(\mathcal{D}_1+\mathcal{D}_2)R_2^{-1}\mathcal{D}_5^\top\big]
 +\big[\mathcal{A}_2-\mathcal{D}_5R_2^{-1}(\mathcal{D}_1+\mathcal{D}_2)^\top\big](\Pi_1+\Pi_2)\\
&\quad+\Pi_2\mathcal{A}_1+\mathcal{A}_1\Pi_2+(\Pi_1+\Pi_2)\big[\mathcal{B}_1-(\mathcal{D}_1+\mathcal{D}_2)R_2^{-1}(\mathcal{D}_1+\mathcal{D}_2)^\top\big](\Pi_1+\Pi_2)\\
&\quad-\Pi_1(\mathcal{B}_1-\mathcal{D}_2R_2^{-1}\mathcal{D}_2^{\top})\Pi_1
 +\big[\mathcal{A}_3+\Pi_1(\mathcal{C}_1-\mathcal{D}_2R_2^{-1}\mathcal{D}_4^\top)\big]\Sigma_3(\Pi_1,\Pi_2)\\
&\quad+\Big\{\big[\mathcal{A}_4-\mathcal{D}_5R_2^{-1}(\mathcal{D}_3+\mathcal{D}_4)^\top\big]
 +\Pi_2\big[\mathcal{C}_1-(\mathcal{D}_1+\mathcal{D}_2)R_2^{-1}(\mathcal{D}_3+\mathcal{D}_4)^\top\big]\\
&\quad-\Pi_1\big[\mathcal{D}_1R_2^{-1}(\mathcal{D}_3+\mathcal{D}_4)^\top+\mathcal{D}_2R_2^{-1}\mathcal{D}_3^\top\big]\Big\}\Sigma_1(\Pi_1,\Pi_2)
 -\mathcal{D}_5R_2^{-1}\mathcal{D}_5^\top=0,\ t\in[0,T],\\
&\Pi_2(T)=0.
\end{aligned}
\right.
\end{equation}

\begin{remark}
The two Riccati equations \eqref{R1} and \eqref{R2} are not standard and entirely new, and we cannot obtain their solvability up to now. However, a special case could be dealt with by some existing reuslts.
\end{remark}

We consider the case that $D_1=D_2=0$, then we have $\mathcal{A}_4=\mathcal{C}_1=\mathcal{D}_1=\mathcal{D}_3=\mathcal{D}_4=0$. The Riccati equations \eqref{R1} and \eqref{R2} of $\Pi_1(\cdot)$ and $\Pi_2(\cdot)$ reduce to:
\begin{equation}\label{R11}
\left\{
\begin{aligned}
&\dot{\Pi}_1+\Pi_1\mathcal{A}_1+\mathcal{A}_1\Pi_1+\Pi_1(\mathcal{B}_1-\mathcal{D}_2R_2^{-1}\mathcal{D}_2^\top)\Pi_1+\mathcal{A}_3\Pi_1\mathcal{A}_3+\mathcal{A}_5=0,\ t\in[0,T],\\
&\Pi_1(T)=\bar{G},
\end{aligned}
\right.
\end{equation}
\begin{equation}\label{R22}
\left\{
\begin{aligned}
&\dot{\Pi}_2+\Pi_2(\mathcal{A}_1+\mathcal{A}_2-\mathcal{D}_2R_2^{-1}\mathcal{D}_5^\top)
 +(\mathcal{A}_1+\mathcal{A}_2-\mathcal{D}_5R_2^{-1}\mathcal{D}_2^\top)\Pi_2-\mathcal{D}_5R_2^{-1}\mathcal{D}_5^\top\\
&\quad+\Pi_1(\mathcal{A}_2-\mathcal{D}_2R_2^{-1}\mathcal{D}_5^\top)+(\mathcal{A}_2-\mathcal{D}_5R_2^{-1}\mathcal{D}_2^\top)\Pi_1
 +\Pi_2(\mathcal{B}_1-\mathcal{D}_2R_2^{-1}\mathcal{D}_2^\top)\Pi_2\\
&\quad+\Pi_1(\mathcal{B}_1-\mathcal{D}_2R_2^{-1}\mathcal{D}_2^\top)\Pi_2+\Pi_2(\mathcal{B}_1-\mathcal{D}_2R_2^{-1}\mathcal{D}_2^\top)\Pi_1=0,\ t\in[0,T],\\
&\Pi_2(T)=0,
\end{aligned}
\right.
\end{equation}
respectively. The solvability of \eqref{R11} and \eqref{R22} can be guaranteed by the sufficient conditions in Chapter 6 of Yong and Zhou \cite{YZ99} and Theorem 5.3 of Yong \cite{Yong99}. We omit the details.

Substituting \eqref{relation2}, \eqref{hat Z} and \eqref{Z} into \eqref{bar u_222}, we get
\begin{equation}\label{bar u_2222}
\begin{aligned}
\bar{u}_2=&-R_2^{-1}\Big\{\big[\mathcal{D}_2^\top\Pi_1+\mathcal{D}_4^\top\Sigma_2(\Pi_1)\big]X
           +\big[\mathcal{D}_1^\top(\Pi_1+\Pi_2)+\mathcal{D}_2^\top\Pi_2+\mathcal{D}_3^\top\Sigma_1(\Pi_1,\Pi_2)\\
          &\quad+\mathcal{D}_4^\top\Sigma_3(\Pi_1,\Pi_2)+\mathcal{D}_5^\top\big]\hat{X}\Big\},\ a.e.\ t\in[0,T],\ \bar{\mathbb{P}}\mbox{-}a.s.,
\end{aligned}
\end{equation}
where the optimal ``state" $X(\cdot)$ and its optimal estimate $\hat{X}(\cdot)$ satisfy
\begin{equation}\label{dX}
\left\{
\begin{aligned}
dX(t)&=\Big\{\big[\mathcal{A}_1+(\mathcal{B}_1-\mathcal{D}_2R_2^{-1}\mathcal{D}_2^\top)\Pi_1+(\mathcal{C}_1-\mathcal{D}_2R_2^{-1}\mathcal{D}_4^\top)\Sigma_2(\Pi_1)\big]X
      +\Big[\mathcal{A}_2-(\mathcal{D}_1+\mathcal{D}_2)R_2^{-1}\\
     &\quad\ \times\mathcal{D}_5^\top+(\mathcal{B}_1-\mathcal{D}_2R_2^{-1}\mathcal{D}_2^\top)\Pi_2
      -\big[\mathcal{D}_1R_2^{-1}(\mathcal{D}_1+\mathcal{D}_2)^\top+\mathcal{D}_2R_2^{-1}\mathcal{D}_1^\top\big](\Pi_1+\Pi_2)+(\mathcal{C}_1\\
     &\quad\ -\mathcal{D}_2R_2^{-1}\mathcal{D}_4^\top)\Sigma_3(\Pi_1,\Pi_2)
      -\big[\mathcal{D}_1R_2^{-1}(\mathcal{D}_3+\mathcal{D}_4)^\top+\mathcal{D}_2R_2^{-1}\mathcal{D}_3^\top\big]\Sigma_1(\Pi_1,\Pi_2)\Big]\hat{X}\Big\}dt\\
     &\quad+\Big\{\big[\mathcal{A}_3+(\mathcal{C}_1^\top-\mathcal{D}_4R_2^{-1}\mathcal{D}_2^\top)\Pi_1-\mathcal{D}_4R_2^{-1}\mathcal{D}_4^\top\Sigma_2(\Pi_1)\big]X
      +\Big[\mathcal{A}_4-(\mathcal{D}_3+\mathcal{D}_4)R_2^{-1}\mathcal{D}_5^\top\\
     &\quad\ +(\mathcal{C}_1^\top-\mathcal{D}_4R_2^{-1}\mathcal{D}_2^\top)\Pi_2-\big[\mathcal{D}_3R_2^{-1}(\mathcal{D}_1+\mathcal{D}_2)^\top
      +\mathcal{D}_4R_2^{-1}\mathcal{D}_1^\top\big](\Pi_1+\Pi_2)-\mathcal{D}_4R_2^{-1}\mathcal{D}_4^\top\\
     &\quad\ \times\Sigma_3(\Pi_1,\Pi_2)-\big[\mathcal{D}_3R_2^{-1}(\mathcal{D}_3+\mathcal{D}_4)^\top
      +\mathcal{D}_4R_2^{-1}\mathcal{D}_3^\top\big]\Sigma_1(\Pi_1,\Pi_2)\Big]\hat{X}\Big\}dW(t),\ t\in[0,T],\\
X(0)&=X_0,
\end{aligned}
\right.
\end{equation}
and
\begin{equation}\label{d hatX}
\left\{
\begin{aligned}
d\hat{X}(t)&=\Big\{\mathcal{A}_1+\mathcal{A}_2+\big[\mathcal{B}_1-(\mathcal{D}_1+\mathcal{D}_2)R_2^{-1}(\mathcal{D}_1+\mathcal{D}_2)^\top\big](\Pi_1+\Pi_2)\\
           &\qquad-(\mathcal{D}_1+\mathcal{D}_2)R_2^{-1}\mathcal{D}_5^\top+(\mathcal{C}_1-\mathcal{D}_2R_2^{-1}\mathcal{D}_4^\top)\Sigma_2(\Pi_1)\\
           &\qquad-\big[\mathcal{D}_1R_2^{-1}(\mathcal{D}_3+\mathcal{D}_4)^\top+\mathcal{D}_2R_2^{-1}\mathcal{D}_3^\top\big]\Sigma_1(\Pi_1,\Pi_2)\\
           &\qquad+(\mathcal{C}_1-\mathcal{D}_2R_2^{-1}\mathcal{D}_4^{\top})\Sigma_3(\Pi_1,\Pi_2)\Big\}\hat{X}dt,\ t\in[0,T],\\
 \hat{X}(0)&=X_0,
\end{aligned}
\right.
\end{equation}
respectively. We summarize the above in the following theorem.
\begin{theorem}
Let {\bf(H1)-(H6)} hold, $\Pi_1(\cdot)$ and $\Pi_2(\cdot)$ satisfy \eqref{R1} and \eqref{R2}, respectively, $X(\cdot)$ be the $\mathcal{F}_t$-adapted solution to \eqref{dX}, and $\hat{X}(\cdot)$ be the $\mathcal{F}_t^Y$-adapted solution to \eqref{d hatX}. Define $Y(\cdot),Z(\cdot)$ and $\hat{Z}(\cdot)$ by \eqref{relation2}, \eqref{Z} and \eqref{hat Z}, respectively. Then equation \eqref{lq leader state4} holds and $\bar{u}_2(\cdot)$ given by \eqref{bar u_2222} is the state estimate feedback representation of the leader's optimal control.
\end{theorem}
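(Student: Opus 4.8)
The plan is to run the derivation of Subsection 4.2 in reverse: starting from the objects that are \emph{assumed} to exist --- the Riccati solutions $\Pi_1(\cdot),\Pi_2(\cdot)$ and the forward processes $X(\cdot),\hat X(\cdot)$ --- verify that the six-tuple built from them is genuinely the optimal state/adjoint data of {\bf Problem of the leader}, and that \eqref{bar u_2222} is optimal. First I would confirm that $\hat X(\cdot)$, the solution of \eqref{d hatX}, coincides with $\bar{\mathbb{E}}[X(\cdot)\,|\,\mathcal{F}_t^Y]$: this is immediate from \eqref{observation lq}, the fact that $h(\cdot)$ is deterministic (so that $\bar{\mathbb{P}}=\mathbb{P}^{u_1,u_2}$ does not depend on the controls and $\bar W(\cdot)$ is a $\bar{\mathbb{P}}$-Brownian motion), and Theorem 8.1 of Liptser and Shiryayev \cite{LS77} applied to the linear SDE \eqref{dX}. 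Consequently $\hat Y(\cdot)=(\Pi_1+\Pi_2)\hat X(\cdot)$ and $\hat Z(\cdot)=\Sigma_1(\Pi_1,\Pi_2)\hat X(\cdot)$ are consistent with the filtering of $Y(\cdot),Z(\cdot)$ defined by \eqref{relation2} and \eqref{Z}, \eqref{hat Z}, where the invertibility assumptions {\bf(H5)} and {\bf(H6)} guarantee that $\mathcal{M}_1,\mathcal{M}_2$, hence $\Sigma_1,\Sigma_2,\Sigma_3$, are well defined.

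Next I would apply It\^{o}'s formula to $Y(t)=\Pi_1(t)X(t)+\Pi_2(t)\hat X(t)$, inserting the dynamics \eqref{dX} for $dX$ and \eqref{d hatX} for $d\hat X$. The diffusion term of the resulting equation must be matched against $-Z(t)\,dW(t)$; since $Z(\cdot)$ was defined through \eqref{dW term2}, equivalently \eqref{Z}--\eqref{hat Z}, this identification is precisely an algebraic rearrangement using $\mathcal{M}_1,\mathcal{M}_2$, and its $\hat{\bar\Theta}$-block is zero by the structure of the matrices in \eqref{notation}. The drift term must reproduce the right-hand side of the backward equation in \eqref{lq leader state4}; after inserting \eqref{relation2}, \eqref{hat Z}, \eqref{Z} and grouping the coefficients of $X$ and of $\hat X$ separately, the $X$-coefficient vanishes exactly because $\Pi_1(\cdot)$ solves \eqref{R1}, and the $\hat X$-coefficient vanishes exactly because $\Pi_2(\cdot)$ solves \eqref{R2}. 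The terminal condition $Y(T)=\bar G X(T)$ follows from $\Pi_1(T)=\bar G$, $\Pi_2(T)=0$, and the forward equations in \eqref{lq leader state4} together with the $\hat{\bar\Theta}$-equation are merely \eqref{dX} and \eqref{d hatX} rewritten through the notation \eqref{notation}. This establishes that the constructed six-tuple solves \eqref{lq leader state4}, i.e.\ it is the optimal-``state''/adjoint system associated with $\bar u_2(\cdot)$.

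It then remains to see that $\bar u_2(\cdot)$ given by \eqref{bar u_2222} is optimal. By construction, substituting \eqref{relation2}, \eqref{Z}, \eqref{hat Z} into the stationarity relation \eqref{bar u_2} (equivalently \eqref{bar u_222}) yields precisely \eqref{bar u_2222}, so $\bar u_2(\cdot)$ obeys the first-order condition of the leader's maximum principle. To upgrade this to genuine optimality I would invoke the verification Theorem 3.4, available here since we are in the special case $h(\cdot)$ deterministic: its hypotheses hold because $G_{1x}=G_1x$ is linear (so $G_{1xx}\equiv G_1$ plays the role of the constant $M$), the map $(x,u_2,p,k)\mapsto H_2$ is convex and $x\mapsto\frac12 G_2x^2$ is convex thanks to {\bf(H2)} and {\bf(H4)}, and the minimality condition \eqref{minimal condition} is met because $H_2$ is a convex quadratic in $u_2$ whose unique critical point is exactly $\bar u_2(\cdot)$. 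Hence $\bar u_2(\cdot)$ is the optimal control of {\bf Problem of the leader}, in state-estimate feedback form, which completes the proof.

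The main obstacle is the second paragraph: carrying out the It\^{o} expansion of $\Pi_1X+\Pi_2\hat X$ and checking, block by block, that the $X$- and $\hat X$-parts of the drift collapse to zero under \eqref{R1} and \eqref{R2} while the diffusion collapses to $-Z\,dW$. This is lengthy bookkeeping with the block matrices $\mathcal{A}_i,\mathcal{B}_1,\mathcal{C}_1,\mathcal{D}_i$ of \eqref{notation}, and one must be careful that every conditional expectation $\hat{(\cdot)}=\bar{\mathbb{E}}[\,\cdot\,|\mathcal{F}_t^Y]$ is taken consistently (as in \eqref{expression2}--\eqref{expression3}); however, it uses no idea beyond those already employed to \emph{derive} \eqref{R1}, \eqref{R2}, \eqref{Z} and \eqref{hat Z}. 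Everything else --- the filtering identity, the terminal conditions, and the appeal to Theorem 3.4 --- is routine.
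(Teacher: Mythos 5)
Your proposal is correct and follows essentially the same route as the paper: the paper's "proof" is precisely the forward derivation of Subsection 4.2 (ansatz \eqref{relation2}, It\^{o} expansion, diffusion/drift matching yielding \eqref{Z}, \eqref{hat Z}, \eqref{R1}, \eqref{R2}, and the implicit appeal to Theorems 3.3--3.4), which you simply rerun as a verification. Your explicit checks of the hypotheses of Theorem 3.4 (linearity of $G_{1x}$, convexity from {\bf(H2)}, {\bf(H4)}, and the minimality condition \eqref{minimal condition}) make the sufficiency step more transparent than the paper's one-line invocation, but introduce no new idea.
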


Finally, the optimal control $\bar{u}_1(\cdot)$ of the follower can also be represented in $\hat{X}(\cdot)$. More precisely, by \eqref{bar u_11}, noting \eqref{notation} and \eqref{bar u_222}, we derive
\begin{equation}\label{bar u_111}
\begin{aligned}
\bar{u}_1&=\Big\{\mathcal{A}_6+\mathcal{B}_2(\Pi_1+\Pi_2)+(D_1^2P+R_1)^{-1}D_1D_2PR_2^{-1}\big[(\mathcal{D}_1+\mathcal{D}_2)^\top(\Pi_1+\Pi_2)\\
         &\quad+(\mathcal{D}_3+\mathcal{D}_4)^\top\Sigma_1(\Pi_1,\Pi_2)+\mathcal{D}_5^\top\big]\Big\}\hat{X},
          \ a.e.\ t\in[0,T],\ \bar{\mathbb{P}}\mbox{-}a.s.,
\end{aligned}
\end{equation}
where $\mathcal{A}_6:=\begin{pmatrix}-(D_1^2P+R_1)^{-1}(B_1+D_1C)P&0\end{pmatrix}$ and $\mathcal{B}_2:=\begin{pmatrix}0&-(D_1^2P+R_1)^{-1}B_1\end{pmatrix}$.

Thus, the open-loop Stackelberg equilibrium $(\bar u_1(\cdot),\bar u_2(\cdot))$ is given by \eqref{bar u_111} and \eqref{bar u_2222}, in its state estimate feedback form.

\section{Concluding remarks}

In this paper, we have discussed the Stackelberg stochastic differential game with asymmetric noisy observation. This kind of game problem has three interesting characteristics worthy of being emphasized. Firstly, the follower could only observe the noisy observation process, while the leader can observe both the state and noisy observation processes. Thus, the information between the follower and the leader has the asymmetric feature. Second, the leader's problem is solved under some mild assumption, with the aid of some new Riccati equations. Finally, the optimal control of the leader relies not only on the state but also on its estimate based on the observation process.

Possible extension to the Stackelberg stochastic differential game with correlated state and observation noises, applying state decomposition and backward separation principle (Wang et al. \cite{WWX13,WWX15,WWX18}), rather than Girsanov's measure transformation, are worthy to research. The general solvability of the Riccati equations \eqref{R11} and \eqref{R22} requires systematic study. We will consider these topics in the future research.

\end{document}